\newcommand{\Circled}[1]{\mathbf{#1}}
\DeclareMathOperator{\Diff}{Diff}
\DeclareMathOperator{\leftiso}{\LL\OO}
\DeclareMathOperator{\rightiso}{\RR\OO}
\newtheorem{theorem}{Theorem}[section]
\newtheorem{definition}[theorem]{Definition}
\newtheorem*{definition*}{Definition}
\newtheorem{proposition}[theorem]{Proposition}
\newtheorem{lemma}[theorem]{Lemma}
\newtheorem{remark}[theorem]{Remark}
\newtheorem*{remark*}{Remark}
\newtheorem*{remarks*}{Remarks}
\newtheorem*{notation*}{Notation}
\newtheorem*{ex*}{Example}
\newtheorem*{exs*}{Examples}
\newtheorem*{app*}{Application}
\newtheorem{conjecture*}{Conjecture}
\def\ts{\thinspace}
\title{
The universal equivariance properties of exotic aromatic B-series
}
\author{
Adrien Laurent\textsuperscript{1} and Hans Munthe-Kaas\textsuperscript{2}
}
\begin{document}
\footnotetext[1]{
Univ Rennes, INRIA (Research team MINGuS), IRMAR (CNRS UMR 6625) and ENS Rennes, France.
Adrien.Laurent@inria.fr.}
\footnotetext[2]{
Department of Mathematics and Statistics, UiT -- The Arctic University of Norway, Tromsø, Norway. Hans.Munthe-Kaas@uit.no.}

\maketitle

\begin{abstract}
The exotic aromatic Butcher series were originally introduced for the calculation of order conditions for the high order numerical integration of ergodic stochastic differential equations in~$\R^d$ and on manifolds.
We prove in this paper that exotic aromatic B-series satisfy a universal geometric property, namely that they are characterised by locality and equivariance with respect to orthogonal changes of coordinates.
This characterisation confirms that exotic aromatic B-series are a fundamental geometric object that naturally generalises aromatic B-series and B-series, as they share similar equivariance properties.
In addition, we provide a classification of the main subsets of the exotic aromatic B-series, in particular the exotic B-series, using different equivariance properties.
Along the analysis, we present a generalised definition of exotic aromatic trees, dual vector fields, and we explore the impact of degeneracies on the classification.

\smallskip

\noindent
{\it Keywords:\,} Butcher series, exotic aromatic B-series, equivariance, geometric numerical integration, stochastic differential equations.
\smallskip

\noindent
{\it AMS subject classification (2020):\,} 15A72, 37C81, 41A58, 60H35, 65C30.

\end{abstract}

\section{Introduction}
\label{section:Introduction}

Consider the ordinary differential equation
\begin{equation}
\label{equation:ODE}
y'(t)=f(y(t)),\quad y(0)=y_0,
\end{equation}
where~$f\colon \R^d\rightarrow \R^d$ is a Lipschitz vector field and~$y_0\in \R^d$, and a one-step integrator for solving~\eqref{equation:ODE} of the form
\begin{equation}
\label{equation:one-step_integrator}
y_{n+1}=\Phi_h(y_n),
\end{equation}
where~$h$ is the timestep of the method.
Following the backward error analysis idea~\cite{Hairer06gni}, in order to study the properties of the integrator~\eqref{equation:one-step_integrator} (preservation of invariants or measures, order, behaviour in long-time,\dots), it proves convenient to rewrite the scheme as the exact solution of a modified ODE
\[
\tilde{y}'(t)=\tilde{f}_h(\tilde{y}(t)).
\]
For large classes of integrators, such as Runge-Kutta methods, the modified vector field~$\tilde{f}_h$ can be expressed as a formal Taylor series in~$f$ and its partial derivatives, called a B-series~\cite{Hairer06gni, Chartier10aso, Calaque11tih} (see also \cite{Iserles00lgm, Lundervold11hao, Lundervold13bea, Rahm22aoa} in the context of Lie-group methods).
The paper~\cite{McLachlan16bsm} presents universal geometric conditions on~$\tilde{f}_h$ to ensure that it can be written as a B-series. More precisely, any smooth local map that is invariant under affine changes of coordinates (including affine maps between vector spaces of different dimensions, see Definition~\ref{definition:adim_iso_equiv}) can formally be written as a B-series.

Originally introduced in~\cite{Butcher72aat,Hairer74otb}, Butcher series proved to be a powerful tool for the construction of numerical integrators with high order of accuracy and for the numerical preservation of geometric properties (see, for instance, the textbooks~\cite{Hairer06gni,Butcher16nmf,Butcher21bsa} and the review~\cite{McLachlan17bsa}).
The papers~\cite{Chartier07pfi,Iserles07bsm} introduced simultaneously an extension of B-series called aromatic B-series for the study of volume preserving integrators (see also the recent works~\cite{Bogfjellmo19aso,Bogfjellmo22uat,Lejay22cgr,Laurent23tab,Laurent23tld,Bronasco22cef}).
In this context, one is interested in finding methods~\eqref{equation:one-step_integrator} satisfying~$\Div(\tilde{f}_h)=0$. Aromatic B-series are natural objects in this context as the divergence of a standard B-series rewrites conveniently with aromatic B-series.
One then wonders whether B-series and aromatic B-series are merely tools for manipulating tedious Taylor expansions or natural far-reaching algebraic objects.
This question is answered in~\cite{McLachlan16bsm,MuntheKaas16abs} where universal geometric characterisations of B-series and aromatic B-series are given (see also~\cite{Markl08git,McLachlan17bsa}).

In the context of stochastic differential equations (SDEs), it is known that backward error analysis does not generalise straightforwardly~\cite{Shardlow06mef,Zygalakis11ote}. However, there exists a similar concept for ergodic SDEs.
Consider overdamped Langevin dynamics with Stratonovich noise of the form
\begin{equation}
\label{equation:general_SDE}
dY(t)=\Pi_\MM(Y(t))f(Y(t))dt+\Pi_\MM(Y(t)) \circ dW(t),\quad Y(0)=Y_0,
\end{equation}
where~$f\colon \R^d\rightarrow \R^d$ is a Lipschitz vector field,~$Y_0\in \R^d$,~$\Pi_\MM(x)$ is the orthogonal projection on the tangent bundle of~$\MM$ at the point~$x\in \MM$ (note that~$\Pi_\MM(x)=I_d$ if~$\MM=\R^d$), and~$W$ is a standard~$d$-dimensional Brownian motion in~$\R^d$ on a probability space equipped with a filtration and fulfilling the usual assumptions.
Under a growth assumption on~$f$, the solution of~\eqref{equation:general_SDE} is ergodic, that is, it follows a deterministic distribution, called the invariant measure, in long time~\cite{Faou09csd,Debussche12wbe,Abdulle14hon}.
In~\cite{Laurent20eab,Laurent21ocf,Laurent21ata}, an extension of the aromatic B-series, called exotic aromatic B-series, is introduced to conveniently write Taylor expansions (called Talay-Tubaro expansions~\cite{Talay90eot} in this context) of the solutions of~\eqref{equation:general_SDE} and to build high-order approximations of the invariant measure of~\eqref{equation:general_SDE}, with applications in molecular dynamics~\cite{Lelievre10fec}.
The main idea is to introduce two new types of edges to represent the Laplacian and the scalar product.
Ergodic integrators for solving~\eqref{equation:general_SDE} have an invariant measure that can be written as the invariant measure of an exact problem of the form~\eqref{equation:general_SDE} with a modified vector field~$\tilde{f}_h$ that typically has the form
\begin{equation}
\label{equation:modified_vf_example}
h\tilde{f}_h=h f^i\partial_i
+h^2[c_1 f^i_{jj}+c_2 f^i_j f^j +c_3 f^j f^j f^i]\partial_i
+h^3[c_4 f^i_{jjkk}+c_5 f^j_k f^i_{jk}+c_6 f^k f^k_j f^j f^i]\partial_i
+\dots
\end{equation}
where~$\partial_i$ is the vector basis of~$\R^d$, the~$c_n$ are constants, and each term is summed on all involved indices, in the spirit of the Einstein summation notation.
Note that the expansion~\eqref{equation:modified_vf_example} is a linear combination of monomials in the components~$f^i$ and their partial derivatives, where we use pairs of indices. Note also that the power of~$h$ associated to a monomial is not given by the number of occurrences of~$f$, in opposition to the deterministic context.
For the integrators presented in~\cite{Laurent20eab,Laurent21ocf} for solving~\eqref{equation:general_SDE}, the modified vector field can be expressed as an exotic B-series~\cite{Bronasco22cef} in~$\R^d$ (see examples in~\cite[Sec.\ts 5.1]{Laurent20eab}) and as a partitioned exotic aromatic B-series on manifolds at least for the first orders~\cite{Laurent21ocf,Bronasco22cef}.
Moreover, the Talay-Tubaro expansions presented in~\cite{Laurent20eab,Laurent21ocf} are exotic aromatic S-series~\cite{Bronasco22ebs}.

For the high-order approximation of~\eqref{equation:general_SDE}, the number of terms in the Taylor expansions grows quickly, which further motivates the use of exotic aromatic B-series for the study of integrators for solving SDEs (see, for instance, the order two expansion in~\cite[App.\ts D]{Laurent21ocf}).
Then, a natural question is the following: are exotic aromatic B-series just a technical tool used for carrying out tedious calculation, or are they fundamental objects satisfying similar geometric properties as B-series and aromatic B-series?
In this paper, we show that the exotic aromatic B-series satisfy a universal equivariance property, which justifies that the exotic aromatic B-series formalism is a natural extension of B-series and aromatic B-series.
We extend this result to characterise subsets of the exotic aromatic B-series, such as the exotic B-series.
This work also allows us to give a more general definition of exotic aromatic B-series, free of the degeneracies tied to the numerical context.

The article is organized as follows.
We present in Section~\ref{section:preliminaries} the definition of the geometric properties used in the characterisation, the general definition of the exotic aromatic B-series, and the main results of the paper.
The characterisation of exotic aromatic B-series is proven in Section~\ref{section:proof}, while we derive a finer classification of the main subsets of exotic aromatic B-series in Section~\ref{section:proof_strong_equiv}.
We give outlooks on future works in Section~\ref{section:future_work}.

\section{Preliminaries and main results}
\label{section:preliminaries}

This section is devoted to the definition of locality, equivariance and decoupling, to the new general definition of exotic aromatic trees and their associated elementary differential, and to the main results of the paper (Subsection~\ref{section:main_result}).

\subsection{Locality and equivariance}
\label{section:def_geom_properties}

We define the geometric properties used in the characterisation of exotic aromatic B-series. A natural property of modified vector fields is locality.
\begin{definition}
Let~$\mathfrak{X}(\R^d)$ the set of smooth vector fields on~$\R^d$, a map~$\varphi_d\colon\mathfrak{X}(\R^d)\rightarrow\mathfrak{X}(\R^d)$ is local if
\[\supp(\varphi_d(f))\subset\supp(f), \quad \supp(f)=\overline{\{x\in\R^d,f(x)\neq 0\}}.\]
\end{definition}

In~\cite{MuntheKaas16abs}, the aromatic B-series are characterised by locality and a property of equivariance.
Let~$G$ be a finite dimensional Lie subgroup of the set of diffeomorphisms~$\Diff(\R^d)$ on~$\R^d$.
In this work, we consider groups of the form~$G=H\ltimes \R^d$, where~$H$ is a subgroup of~$\GL_d(\R)$ called the isotropy group.
An element~$g=(A,b)\in G$ acts on a vector field~$f\in \mathfrak{X}(\R^d)$ by
\[(g\cdot f)(x)=Af(A^{-1}(x-b)).\]
The~$G$-equivariance is the compatibility with the action of~$G$ on vector fields.
\begin{definition}
A map~$\varphi_d\colon\mathfrak{X}(\R^d)\rightarrow\mathfrak{X}(\R^d)$ is~$G$-equivariant if
\[\varphi_d(g\cdot f)=g\cdot \varphi_d(f), \quad g\in G, \quad f\in \mathfrak{X}(\R^d).\]
\end{definition}
For the sake of simplicity, we call~$\GL$-equivariance (respectively orthogonal-equivariance) the~$H\ltimes \R^d$-equivariance properties with~$H=\GL_d(\R)$ (respectively~$H=\Or_d(\R)$).
Note that the locality and~$G$-equivariance are properties in fixed dimension.
The first main result of this paper is the characterization of exotic aromatic B-series with orthogonal-equivariance and locality.

The second goal of this work is to characterize the subsets of exotic aromatic B-series, in particular the exotic B-series. In this context, the dimension~$d\geq 0$ of the problem plays an important role, so that we rely on sequences of maps~$\varphi=(\varphi_d\colon \mathfrak{X}(\R^d)\rightarrow\mathfrak{X}(\R^d))_d$ indexed by the dimension~$d$.
Such a sequence is local (respectively~$G$-equivariant) if~$\varphi_d$ is local (respectively~$G$-equivariant) for all~$d$.
To observe the interactions between the dimensions, the notion of equivariance is extended to affine transformations~\cite{McLachlan16bsm}, and we describe such a property as a categorical equivariance property in the following.
\begin{definition}
\label{definition:adim_iso_equiv}
Let~$\Aff=(\Aff_{d_1,d_2})_{d_1,d_2}$ be the category of affine transformations with
\[\Aff_{d_1,d_2}=\{a\colon \R^{d_1}\rightarrow \R^{d_2}, \ a(x)=Ax+b, \ (A,b)\in \R^{d_2\times d_1} \times \R^{d_2}\},\quad d_1, d_2\geq 0.\]
Let~$\HH=(\HH_{d_1,d_2})_{d_1,d_2}$ be a subcategory of~$\Aff$.
A sequence of smooth maps~$\varphi=(\varphi_d\colon \mathfrak{X}(\R^d)\rightarrow\mathfrak{X}(\R^d))_d$ is equivariant with respect to~$\HH$ if for all~$d_1$,~$d_2\geq 0$, for all~$a(x)=Ax+b\in \HH_{d_1,d_2}$ and~$x\in \R^{d_1}$,~$\varphi$ satisfies
\[
f_2(a(x))=Af_1(x) \ \Rightarrow \ \varphi_{d_2}(f_2)(a(x))=A\varphi_{d_1}(f_1)(x), \quad f_1\in \mathfrak{X}(\R^{d_1}), \quad f_2\in \mathfrak{X}(\R^{d_2}).
\]
A sequence of smooth maps~$\varphi=(\varphi_d)_d$ is affine-equivariant if it is equivariant with respect to~$\Aff$.
\end{definition}

As the Taylor expansions that appear in the study of stochastic dynamics are typically orthogonal-equivariant, we consider the equivariance properties with respect to subcategories of~$\Aff$ on which the orthogonal group acts naturally.
\begin{definition}
Let the categories of left-orthogonal and right-orthogonal transformations
\begin{align}
\label{equation:Stiefel}
\leftiso_{d_1,d_2}&=\{a(x)=Ax+b\in\Aff_{d_1,d_2}, \ A^TA=I_{d_1}\},\\
\label{equation:Grassmann}
\rightiso_{d_1,d_2}&=\{a(x)=Ax+b\in\Aff_{d_1,d_2}, \ AA^T=I_{d_2}\}.
\end{align}
The sequence of smooth maps~$\varphi=(\varphi_d)_d$ is left-orthogonal-equivariant (respectively right-orthogonal-equivariant) if it is equivariant with respect to all affine transformations in~$\leftiso$ (respectively~$\rightiso$).
A sequence~$\varphi=(\varphi_d)_d$ that is both left and right-orthogonal-equivariant is called semi-orthogonal-equivariant.
\end{definition}
Note that the categories used in this paper all have a structure of semigroupoids, and~$\leftiso$ is associated to the Stiefel manifolds.

The elementary differentials associated to standard B-series have the property to keep decoupled systems decoupled. Some exotic aromatic B-series also satisfy this property, which motivates the following definition.
Similarly to~\cite{McLachlan16bsm}, for~$f_{1}\in \mathfrak{X}(\R^{d_1})$ and~$f_2\in \mathfrak{X}(\R^{d_2})$, we use the notation
\[h=f_1\oplus f_2\in \mathfrak{X}(\R^{d_1+d_2}), \quad h(x,y)=(f_1(x),f_2(y)).\]
\begin{definition}
A sequence~$\varphi=(\varphi_d\colon \mathfrak{X}(\R^d)\rightarrow\mathfrak{X}(\R^d))_d$ is decoupling if for all~$f_{1}\in \mathfrak{X}(\R^{d_1})$ and~$f_2\in \mathfrak{X}(\R^{d_2})$,~$\varphi$ satisfies
\[\varphi_{d_1+d_2}(f_1\oplus f_2)=\varphi_{d_1}(f_1)\oplus \varphi_{d_2}(f_2),\]
that is, for all~$x\in \R^{d_1}$,~$y\in \R^{d_2}$,
\[\varphi_{d_1+d_2}((f_1(x), f_2(y)))=(\varphi_{d_1}(f_1)(x),\varphi_{d_2}(f_2)(y)).\]
A sequence~$\varphi$ is trivially decoupling if for all~$f\in \mathfrak{X}(\R^{d_1})$,~$\varphi$ satisfies
\[\varphi_{d_1+d_2}(f\oplus 0)=\varphi_{d_1}(f)\oplus 0.\]
\end{definition}

The left-orthogonal-equivariance and right-orthogonal-equivariance properties are stronger than the orthogonal-equivariance in the following sense. The proof is omitted as it is nearly identical to~\cite[Lem.\ts 4.2 and 6.1]{McLachlan16bsm}.
\begin{proposition}[\cite{McLachlan16bsm}]
Let~$\varphi=(\varphi_d\colon \mathfrak{X}(\R^d)\rightarrow\mathfrak{X}(\R^d))_d$ be a sequence of smooth maps.
If~$\varphi$ is left-orthogonal-equivariant, then~$\varphi$ is local, orthogonal-equivariant, and trivially decoupling.
If~$\varphi$ is right-orthogonal-equivariant, then~$\varphi$ is orthogonal-equivariant, trivially decoupling, and decoupling.
\end{proposition}

We summarise the links between the different geometric properties in the following graph for clarity.
\begin{center}
\begin{tikzpicture}
	\node[draw] (1) {semi-orth.-equivariance};
	\node[draw,anchor=west] (2) at ($(1)+(3.2cm,0.5cm)$) {left-orth.-equivariance};
	\node[draw,anchor=west] (3) at ($(1)+(3.2cm,-0.5cm)$) {right-orth.-equivariance};
	\node[draw,anchor=west] (4) at ($(2)+(3.23cm,1cm)$) {locality};
	\node[draw,anchor=west] (5) at ($(3)+(3.1cm,-1cm)$) {decoupling};
	\node[draw,anchor=west] (6) at ($(2)+(3.23cm,0cm)$) {orth.-equivariance};
	\node[draw,anchor=west] (7) at ($(3)+(3.1cm,0cm)$) {trivially decoupling};
	\draw[->] (1.east) -- (2.west);
	\draw[->] (1.east) -- (3.west);
	\draw[->] (2.east) -- (4.south west);
	\draw[->] (3.east) -- (5.north west);
	\draw[->] (2.east) -- (6.west);
	\draw[->] (3.east) -- (6.south west);
	\draw[->] (2.east) -- (7.north west);
	\draw[->] (3.east) -- (7.west);
\end{tikzpicture}
\end{center}

\subsection{Exotic aromatic trees}

Introduced originally in~\cite{Laurent20eab,Laurent21ocf} for the calculation of order conditions for the approximation of ergodic stochastic differential equations, the exotic aromatic trees are an extension of aromatic trees that involves two new kind of edges: lianas and stolons.
The definition we give in the present paper is a generalization of the one originally presented in~\cite{Laurent20eab,Laurent21ocf} (see also~\cite{Bronasco22ebs,Bronasco22cef}).
It reduces to the same definition under a regularity assumption discussed in Section~\ref{section:degeneracies_2}.
We choose an approach based on permutations as in~\cite{MuntheKaas16abs} (see also~\cite{Bogfjellmo19aso,Laurent23tab}).

In the literature, Butcher trees and aromatic trees are defined as directed graphs. Such graphs are encoded by a set of vertices~$V$, a set of arrows~$A$, and two maps~$\sigma$ and~$\tau$ that give the source and target of every arrow. A node formally represents an evaluation while an arrow represents a derivation.
For instance, the Butcher tree~$\eatree{1}{000}{1}$ represents~$f=f^i \partial_i$ and the aromatic tree~$\eatree{11}{100}{1}$ stands for~$\Div(f)f=f^j_j f^i \partial_i$.
In the stochastic context, the Laplacian operator and the scalar product appear in the Taylor expansions and they respectively act as double derivations and double evaluations.
To account for these new operations, we use a generalisation of directed graphs that allows arrows to be sources of arrows and nodes to be sources of other nodes. The Laplacian is then represented with a double arrow (i.e., a double derivation), while the scalar product is given by two nodes identified through the source map~$\sigma$.
\begin{definition}
\label{definition:exotic_aromatic_trees}
We consider graphs of the form~$(V,\Circled{A_0},\sigma,\tau)$ with~$V$ a finite set of vertices indexed from 1 to~$\abs{V}$,~$\Circled{A_0}$ a finite set of arrows indexed from~$\Circled{0}$ to~$\abs{\Circled{A}}$, where~$\Circled{A}=\Circled{A_0} \setminus \{\Circled{0}\}$. The source map~$\sigma\colon V\cup \Circled{A_0}\rightarrow V\cup \Circled{A_0}$ has no fixed points and satisfies~$\sigma\circ\sigma=\text{id}$, and the target map is~$\tau\colon \Circled{A}\rightarrow V$.
Two such graphs are equivalent if there exists a bijection between their sets of nodes and arrows that are compatible with the source and target maps.
An exotic aromatic tree is an equivalence class of such graphs.
We denote~$\Gamma$ the set of exotic aromatic trees.
\end{definition}

Let us add some vocabulary and graphical rules to describe and draw exotic aromatic trees.
If~$\sigma(\Circled{a_1})=\Circled{a_2}$, we say that the unordered pair~$(\Circled{a_1},\Circled{a_2})$ is a liana and we represent it with a dashed edge between the two nodes~$\tau(\Circled{a_1})$ and~$\tau(\Circled{a_2})$, that can be identical.
If~$\sigma(v_1)=v_2$, we call the unordered pair~$(v_1,v_2)$ a stolon and we draw it with a double edge between~$v_1$ and~$v_2$.
The set of lianas is denoted~$L$ and the set of stolons is~$S$.
An exotic aromatic tree without lianas and stolons is called an aromatic tree, an extension of standard trees allowing for loops.
A loop is a list of nodes~$(v_1,\dots,v_K)$ such that there is a standard edge linking~$v_1$ to~$v_2$, \dots,~$v_K$ to~$v_{1}$ (also called K-loop in~\cite{Iserles07bsm}).
Note that an exotic aromatic tree is an aromatic tree if and only if~$\sigma(\Circled{A_0})=V$. In this case, Definition~\ref{definition:exotic_aromatic_trees} reduces to an equivalent definition of the one in~\cite{MuntheKaas16abs}.
We refer to Table~\ref{table:example_EAF} for examples.

\begin{ex*}
Let the exotic aromatic tree~$\gamma=(V,\Circled{A_0},\sigma,\tau)$ with the nodes~$V=\{1,2,3,4\}$, the arrows~$\Circled{A_0}=\{\Circled{0},\Circled{1},\Circled{2},\Circled{3}\}$, and the following source and target maps
\[\sigma=(\Circled{0},1)(\Circled{1},\Circled{2})(2,3)(\Circled{3},4),\quad
\tau=(2,3,4),\]
where we use the notation~$
\tau=(\tau(\Circled{1}),\dots,\tau(\Circled{\abs{\Circled{A}}}))$.
The tree~$\gamma$ has one loop~$(4)$, one liana~$(\Circled{1},\Circled{2})$, and one stolon~$(2,3)$. The associated graph is the following, where we detail the vertices and arrows for clarity.
\[
\, \includegraphics[scale=0.75]{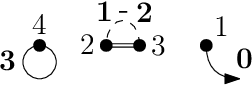} \,
\]
\end{ex*}

An exotic aromatic tree has a unique root~$r$ defined the following way.
If~$\sigma(\Circled{0}) \in V$,~$r=\sigma(\Circled{0})$ is the root and~$\Circled{0}$ is called a ghost arrow.
If~$\sigma(\Circled{0}) \in \Circled{A}$, the liana~$r=(\Circled{0},\sigma(\Circled{0}))\in L$ is the root and is called a ghost liana.
We draw the ghost arrow (respectively ghost liana) on the graphical representations of exotic aromatic trees as an edge (respectively dashed edge) with one end left unattached.
In the aromatic context, the ghost arrow is usually omitted on the graphical representation, hence the name.

We define a notion of connectedness on exotic aromatic trees by saying that two nodes or arrows~$x$,~$y\in V\cup \Circled{A_0}$ are neighbours if~$\sigma(x)=y$ or~$\tau(x)=y$ or~$\tau(y)=x$.
The connected components without the root are called aromas and a finite unordered collection of aromas is a multi-aroma.
The connected component with the root is a connected exotic aromatic tree. We denote~$\Gamma_c$ the set of connected exotic aromatic trees and~$\Gamma^0$ the set of multi-aromas, also represented as equivalence classes of graphs~$(V,\Circled{A},\sigma,\tau)$ without the arrow~$\Circled{0}$. The aromas are gathered in~$\Gamma^0_c$. An exotic aromatic tree decomposes into a finite number of aromas and one connected exotic aromatic tree. This notion of connectedness is a strong motivation to understand the exotic aromatic trees as graphs and not as trees as done beforehand in the literature.
If there are no lianas and no stolons, we find the standard definition of aromas and rooted trees in the context of aromatic trees.

We define an exotic tree (respectively a stolonic tree) as an exotic aromatic tree that reduces to a standard Butcher tree when removing all the lianas (respectively by removing all the stolons).
Note that there is a difference between the notions of exotic trees and connected exotic aromatic trees. A connected exotic aromatic tree does not reduce to a tree in general when removing the lianas.

\begin{ex*}
Let the following exotic aromatic trees
\[\gamma_1=\eatree{11}{100}{1}, \quad
\gamma_2=\eatree{011}{110}{1}, \quad
\gamma_3=\eatree{011}{010}{1}.\]
The exotic aromatic tree~$\gamma_1$ is a disconnected aromatic tree with one aroma. The graph~$\gamma_2$ is connected, but is not an exotic tree as removing the lianas of~$\gamma_2$ yields~$\gamma_1$, which is not a tree.
On the other hand,~$\gamma_3$ is an exotic tree.
\end{ex*}

We denote the set of nodes that are the target of~$j$ arrows by~$V_j$. For a given aromatic tree~$\gamma$, we define its composition~$\kappa\colon \N\rightarrow\N$ by~$\kappa(j)=\abs{V_j}$, and its derived composition by~$\boldsymbol{\kappa}'(j)=j\kappa(j)$. A straightforward observation yields that the cardinals of~$V$ and~$\Circled{A}$ satisfy~$\abs{V}=\abs{\kappa}$ and~$\abs{\Circled{A}}=\abs{\boldsymbol{\kappa}'}$, where~$\abs{\kappa}=\kappa(0)+\kappa(1)+\dots$
We write~$\Gamma_\kappa$ the set of exotic aromatic trees with composition~$\kappa$ and~$\Gamma_m$ the set of exotic aromatic trees such that~$\abs{\kappa}=m$. Note that~$\Gamma_\kappa$ is finite while~$\Gamma_m$ is infinite for all~$m$.
Contrary to the case of Butcher trees and aromatic trees, the order of an exotic aromatic tree is not given by the number of its nodes~$\abs{\kappa}$, but by the number of transpositions in the source map~$\sigma$. We shall see in Definition~\ref{definition:elementary_differential} that the order represents the number of pairs of indices appearing in the associated elementary differential (see, for instance, in~\eqref{equation:modified_vf_example}). We follow the definition of the order of an exotic aromatic tree given in~\cite{Laurent20eab,Laurent21ocf}.
\begin{lemma}
\label{lemma:def_order}
Define the order~$\abs{\gamma}$ of an exotic aromatic tree~$\gamma\in \Gamma_\kappa$ by~$\abs{\gamma}=\abs{V}+\abs{L}-\abs{S}$.
Then the following identity holds
\begin{equation}
\label{equation:equality_kappa_exotic_aromatic}
\abs{\kappa}+\abs{\boldsymbol{\kappa}'}+1=2\abs{\gamma}.
\end{equation}
\end{lemma}

In the aromatic context, the order of an aromatic tree coincides with the number of nodes and~\eqref{equation:equality_kappa_exotic_aromatic} becomes
\begin{equation}
\label{equation:equality_kappa_aromatic}
\abs{\gamma}=\abs{\kappa}=\abs{\boldsymbol{\kappa}'}+1.
\end{equation}
If an exotic aromatic tree satisfies~\eqref{equation:equality_kappa_aromatic}, it does not imply that~$\gamma$ is aromatic. In fact, there exists an infinite number of exotic aromatic trees satisfying~\eqref{equation:equality_kappa_aromatic} that do not reduce to aromatic trees: the exotic aromatic trees with the same number of stolons and lianas.
In the context of branched rough paths, a similar identity to~\eqref{equation:equality_kappa_aromatic} on multi-indices is used in~\cite[eq.\ts (6.3)]{Linares21tsg}, and the composition~$\kappa$ is called the fertility in this context.

\begin{proof}[Proof of Lemma~\ref{lemma:def_order}]
The arrows are either part of a liana or are standard arrows whose source are nodes. We denote~$\Circled{A}_{\Circled{0}}^{\diamond}$ the latter set.
Similarly, the nodes of~$\gamma\in \Gamma_\kappa$ can be decomposed into two sets: the nodes that are the source of an arrow in~$\Circled{A}_{\Circled{0}}^{\diamond}$, gathered in~$V^{\diamond}$, and the ones that are the source of no arrows (the stolons). We observe that
\[\abs{V}=\abs{V^{\diamond}}+2\abs{S},\quad \abs{\Circled{A_0}}=\abs{\Circled{A}_{\Circled{0}}^{\diamond}}+2\abs{L},\quad \abs{V^{\diamond}}=\abs{\Circled{A}_{\Circled{0}}^{\diamond}}.\]
Each node in~$V^{\diamond}$ is the source of a unique arrow in~$\Circled{A}_{\Circled{0}}^{\diamond}$, so that~$\abs{V^{\diamond}}=\abs{\Circled{A}_{\Circled{0}}^{\diamond}}$.
Thus, we deduce
\[\abs{\kappa}+\abs{\boldsymbol{\kappa}'}+1=\abs{V}+\abs{\Circled{A_0}}
=2(\abs{V}+\abs{L}-\abs{S})
=2\abs{\gamma},\]
which gives the desired identity~\eqref{equation:equality_kappa_exotic_aromatic}.
\end{proof}

In Table~\ref{table:example_EAF}, we present the list of the exotic aromatic trees of order one and two (see also Appendix~\ref{section:list_trees_order_3} for the order three). On the contrary of the aromatic case, there exists an infinite number of exotic aromatic trees for a given number of nodes~$\abs{\kappa}>0$. Indeed, adding any number of lianas to an exotic aromatic tree does not modify the value of~$\abs{\kappa}$, but gives a different tree.
\begin{table}[!htb]
	\setcellgapes{2pt}
	\centering
	\begin{tabular}{|c|c|c|c|c|c|c|c|}
	\hline
	$\abs{\gamma}$ &~$\abs{\kappa}$ &~$\kappa$ &~$\boldsymbol{\kappa}'$ &~$\tau$ &~$\sigma$ &~$\gamma$ &~$\FF(\gamma)(f)$ \\
	\hhline{|=|=|=|=|=|=|=|=|}
	$1$ &~$1$ &~$(1)$ &~$(0)$ &  &~$(\Circled{0},1)$ &~$\eatree{1}{000}{1}$ &~$f^i\partial_i$ \\
	\hline
	$2$ &~$1$ &~$(0,0,1)$ &~$(0,0,2)$ &~$(1,1)$ &~$(\Circled{0},1)(\Circled{1},\Circled{2})$ &~$\eatree{001}{010}{1}$ &~$f_{jj}^i\partial_i$ \\
	\cline{6-8}
	 &  &  &  &  &~$(\Circled{0},\Circled{1})(\Circled{2},1)$ &~$\eatree{001}{110}{1}$ &~$f^j_{ij}\partial_i$ \\
	\hline
	$2$ &~$2$ &~$(1,1)$ &~$(0,1)$ &~$(1)$ &~$(\Circled{0},1)(\Circled{1},2)$ &~$\eatree{11}{000}{1}$ &~$f^i_j f^j \partial_i$ \\
	\cline{6-8}
	 &  &  &  &  &~$(\Circled{0},2)(\Circled{1},1)$ &~$\eatree{11}{100}{1}$ &~$f^j_j f^i \partial_i$ \\
	 \cline{6-8}
	 &  &  &  &  &~$(\Circled{0},\Circled{1})(1,2)$ &~$\eatree{11}{011}{1}$ &~$f^j f^j_i \partial_i$ \\
	\hline
	$2$ &~$3$ &~$(3)$ &~$(0)$ &  &~$(\Circled{0},1)(2,3)$ &~$\eatree{3}{001}{1}$ &~$f^i f^j f^j \partial_i$ \\
	\hline
	\end{tabular}
	\caption{List of the exotic aromatic trees of order one and two, with their associated composition, derived composition, target map, source map, and elementary differential (see Definition~\ref{definition:elementary_differential}). We use the notation~$\tau=(\tau(\Circled{1}),\dots,\tau(\Circled{\abs{\boldsymbol{\kappa}'}}))$.
}
	\label{table:example_EAF}
	\setcellgapes{1pt}
\end{table}

\subsection{Characterisation of exotic aromatic B-series}
\label{section:main_result}

Butcher trees represent elementary differentials and B-series represent Taylor expansions of the different flows used in numerical analysis~\cite{Hairer06gni}.
Each exotic aromatic tree represents an elementary differential given by the following definition, where we use the standard notation~$\partial_i$ for the vector basis of~$\R^d$.
\begin{definition}
\label{definition:elementary_differential}
Given a smooth vector field~$f\in\mathfrak{X}(\R^d)$ and an exotic aromatic tree~$\gamma=(V,\Circled{A_0},\sigma,\tau)\in \Gamma$, the elementary differential~$\FF_d(\gamma)$ associated to~$\gamma$ is the following vector field
\[
\FF_d(\gamma)(f)=\sum_{i_1,\dots,i_{\abs{\kappa}}\in\{1,\dots,d\}}
\sum_{i_{\Circled{0}},\dots,i_{\Circled{\abs{\boldsymbol{\kappa}'}}}\in\{1,\dots,d\}}
\prod_{v\in V} f_{i_{\tau^{-1}(\{v\})}}^{i_{v}} \delta_{i_\sigma} \partial_{i_{\Circled{0}}},
\]
where~$i_{\tau^{-1}(\{v\})}=i_{l_1}\dots i_{l_m}$ for~$\tau^{-1}(\{v\})=\{l_1,\dots,l_m\}$ and~$\delta_{i_\sigma}=\prod_{j=1}^{\abs{\gamma}} \delta_{i_{p_j},i_{q_j}}$ for the source map~$\sigma=\prod_{j=1}^{\abs{\gamma}} (p_j, q_j)$ with~$\delta_{i,j}=1$ if~$i=j$ and 0 else.
The elementary differential map of an exotic aromatic tree~$\gamma$ is the following sequence of maps indexed by the dimension of the problem
\[\FF(\gamma)=(\FF_d(\gamma)\colon\mathfrak{X}(\R^d)\rightarrow \mathfrak{X}(\R^d))_d.\]
The maps~$\FF_d$ are extended by linearity to~$\Span(\Gamma)$.
\end{definition}

\begin{ex*}
The following exotic aromatic tree~$\gamma$ represents the elementary differential:
\[
\gamma=\eatree{201}{011}{2}, \quad
\FF(\gamma)(f)=\sum_{i_v,i_{\Circled{a}}} f^{i_{1}} f^{i_2} f_{i_{\Circled{1}}i_{\Circled{2}}}^{i_3} 
\delta_{i_{\Circled{0}},i_1} \delta_{i_{\Circled{1}},i_{\Circled{2}}} \delta_{i_2,i_3}
\partial_{i_{\Circled{0}}}
=\sum_{i,j,k} f^{i} f^{k} f_{jj}^{k} \partial_{i}
=(f,\Delta f)f.
\]
Further examples are presented in Table~\ref{table:example_EAF}. Note that in the elementary differentials, every index appears twice. For aromatic trees, every index appears both at the top and at the bottom, while this is not the case in general for exotic aromatic trees.
\end{ex*}

For a fixed dimension~$d$, the elementary differential map~$\FF_d$ is not injective in general. There can be multiple ways to write a given elementary differential with exotic aromatic trees if the dimension~$d$ is too low.
For instance, in dimension~$d=1$, all the trees with composition~$\kappa$ represent the same elementary differential
\[\FF_1(\gamma)(f)=\prod_{j=0}^\infty (f^{(j)})^{\kappa(j)}.\]
This is a strong motivation for considering sequences of maps~$\FF(\gamma)=(\FF_d(\gamma))_d$ indexed by the dimension of the problem.
We prove in Section~\ref{section:proof_strong_equiv} that exotic aromatic trees represent independent elementary differentials.

\begin{remark}
The elementary differential extends to multi-aromas~$\gamma=(V,\Circled{A},\sigma,\tau) \in \Gamma^0$ by
\[
\FF_d(\gamma)(f)=
\sum_{i_1,\dots,i_{\abs{\kappa}}\in\{1,\dots,d\}}
\sum_{i_{\Circled{1}},\dots,i_{\Circled{\abs{\boldsymbol{\kappa}'}}}\in\{1,\dots,d\}}
 \prod_{v\in V} f_{i_{\tau^{-1}(\{v\})}}^{i_{v}} \delta_{i_\sigma}.
\]
\end{remark}


An exotic aromatic B-series is a formal series indexed over exotic aromatic trees. As we consider Taylor expansions and thus use the grading by the number of nodes, we consider series with a finite number of trees with~$m$ nodes for all~$m$. This assumption is not required in the numerical applications~\cite{Laurent20eab,Laurent21ocf} as the expansions are graded naturally by the order of the trees and not by the number of nodes.
\begin{definition}
\label{definition:EAB}
Given a coefficient map~$b\colon \Gamma\rightarrow\R$ that has finite support on~$\Gamma_m$ for~$m>0$, the associated exotic aromatic B-series in dimension~$d$ is the following formal series
\[B_d(b)=\sum_{m>0} \sum_{\gamma\in\Gamma_m} b(\gamma)\FF_d(\gamma).\]
An exotic aromatic B-series is a sequence~$B(b)=(B_d(b))_d$ indexed by the dimension.
If the support of~$b$ is included in the set of aromatic/exotic/stolonic trees,~$B(b)$ is called an aromatic/exotic/stolonic B-series.
\end{definition}


The first main result of this paper is the characterization of exotic aromatic B-series with orthogonal-equivariance and locality.
\begin{theorem}
\label{theorem:orthogonal_equivariance_EAB}
Let~$\varphi=(\varphi_d\colon \mathfrak{X}(\R^d)\rightarrow \mathfrak{X}(\R^d))_d$ be a sequence of smooth maps. Then, the Taylor expansion of~$\varphi_d$ around the trivial vector field~$0$ is an exotic aromatic B-series~$\varphi_d=B_d(b_d)$ if and only if~$\varphi_d$ is local and orthogonal-equivariant.
If, in addition,~$\varphi$ is trivially decoupling, then there exists a coefficient map~$b\colon \Gamma\rightarrow\R$ such that~$\varphi=B(b)$.
\end{theorem}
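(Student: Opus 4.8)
The plan is to prove both implications, treating the forward direction (a B-series is local and orthogonal-equivariant) as routine and the converse as the substantial one. For the forward direction I would verify the two properties on a single elementary differential $\FF_d(\gamma)$ and then extend by linearity and to formal series. Locality is immediate, since $\FF_d(\gamma)(f)(x)$ is a polynomial in the derivatives of $f$ at $x$, so its support lies in $\supp(f)$. Orthogonal-equivariance reduces to the observation that under $g=(A,b)$ with $A^TA=I_d$ each factor $f^{i_v}_{i_{\tau^{-1}(\{v\})}}$ transforms through the matrices $A$, while the contractions $\delta_{i_\sigma}$ are preserved precisely because $\sum_k A_{ki}A_{kj}=\delta_{ij}$. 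This is exactly the point where the restriction to the orthogonal group (rather than $\GL_d$) enters: the Kronecker deltas encoding lianas and stolons are $\Or_d$-invariant but not $\GL_d$-invariant.

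For the converse I would first expand $\varphi_d$ into its homogeneous Taylor components $\varphi_d=\sum_n\varphi_d^{(n)}$, where $\varphi_d^{(n)}$ is the symmetric $n$-linear part. Because the action $(g\cdot f)(x)=Af(A^{-1}(x-b))$ is linear in $f$, it preserves the degree of homogeneity, so each $\varphi_d^{(n)}$ is again local and orthogonal-equivariant. Here the multilinearity from the Taylor expansion is what supplies polynomial (in fact multilinear) dependence on the jets of $f$, so no dilation argument is needed, which is a point where the orthogonal setting differs from the $\GL$-based treatment of aromatic B-series. I would then invoke a Peetre-type theorem to conclude that each local multilinear $\varphi_d^{(n)}$ is a differential operator of finite order, and use translation-equivariance (the $\R^d$ factor of $G$) to deduce that its coefficients are constant. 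Thus the $i$-th component $\varphi_d^{(n)}(f)(x)^i$ is a finite linear combination, with constant coefficients, of monomials $\prod_{k=1}^n\partial^{\alpha_k}f^{j_k}(x)$ carrying one free output index $i$.

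The heart of the argument is then the first fundamental theorem of invariant theory for the orthogonal group: an $\Or_d$-equivariant constant tensor is a linear combination of products of Kronecker deltas. Applied to the coefficient tensor, this forces each surviving monomial to be a complete pairing, by Kronecker deltas, of its indices, namely the output index, the $n$ component (upper) indices, and the derivative (lower) indices. I would then read each pairing as an exotic aromatic tree: a factor $\partial^{\alpha_k}f^{j_k}$ becomes a node with $\abs{\alpha_k}$ incoming arrows, a delta pairing a derivative index with a component index is a standard arrow, a delta pairing two derivative indices is a liana, one pairing two component indices is a stolon, and the pairing involving the output index fixes the ghost arrow or ghost liana encoding the root. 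This identifies the pairing with an involution $\sigma$ on $V\cup\Circled{A_0}$ and recovers exactly $\FF_d(\gamma)$, giving $\varphi_d=B_d(b_d)$. Finiteness of the support on each $\Gamma_m$ follows because the finite order of $\varphi_d^{(n)}$ bounds the number of arrows, hence the number of lianas and stolons, leaving only finitely many trees with $n$ nodes.

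For the final statement under trivial decoupling, I would show the coefficient maps $b_d$ can be chosen independently of $d$. The key ingredient is the linear independence of the elementary differentials $\{\FF_d(\gamma)\}$ for $d$ large relative to the complexity of the trees involved, which pins down $b_d$ uniquely in high dimensions. Combining the identity $\FF_{d_1+d_2}(\gamma)(f\oplus 0)=\FF_{d_1}(\gamma)(f)\oplus 0$ with trivial decoupling $\varphi_{d_1+d_2}(f\oplus 0)=\varphi_{d_1}(f)\oplus 0$ yields $\sum_\gamma[b_{d_1+d_2}(\gamma)-b_{d_1}(\gamma)]\FF_{d_1}(\gamma)=0$; taking $d_1$ large and invoking linear independence shows the coefficients stabilise and define a single map $b\colon\Gamma\to\R$. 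Running the same identity in reverse, from a high enough dimension down, then expresses $\varphi_d=B_d(b)$ in every dimension, including the low ones where $\FF_d$ is not injective. I expect the main obstacle to be establishing this linear independence of exotic elementary differentials for large $d$, and, in the first part, checking that the invariant-theoretic pairing matches the combinatorics of $\sigma$ exactly, in particular that the full orthogonal group (not $\mathrm{SO}_d(\R)$) is used so that no Levi-Civita invariants intrude.
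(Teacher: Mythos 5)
Your proposal follows essentially the same route as the paper's proof: Taylor expansion with transfer of locality and equivariance to each homogeneous term, the Peetre theorem plus translation-invariance to reduce to constant-coefficient symmetric multilinear operators, the first fundamental theorem of $\Or_d(\R)$-invariant theory to write the coefficient tensors as complete Kronecker-delta pairings, and the dictionary between pairings (fixed-point-free involutions~$\sigma$ on~$V\cup \Circled{A_0}$, with node--arrow pairs as edges, arrow--arrow pairs as lianas, node--node pairs as stolons) and exotic aromatic trees, which is exactly the content of Theorems~\ref{theorem:scaling_property_O} and~\ref{theorem:surjection_EAF}. Your final step under trivial decoupling, combining~$\FF_{d_2}(\gamma)(f\oplus 0)=\FF_{d_1}(\gamma)(f)\oplus 0$ with linear independence of elementary differentials in high dimension (the obstacle you flag, which the paper settles via Weyl's bound~$2d\geq \abs{\kappa}+\abs{\boldsymbol{\kappa}'}+1$ in Theorem~\ref{theorem:surjection_EAF}), is precisely the paper's Proposition~\ref{proposition:isomorphism_F}.
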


Theorem~\ref{theorem:orthogonal_equivariance_EAB} provides a simple geometric criterion for checking whether a modified vector field corresponds to an exotic aromatic B-series.
In addition, it confirms that the exotic aromatic B-series are a natural extension of the aromatic B-series as they both satisfy similar universal geometric properties~\cite{MuntheKaas16abs}.

We are then interested in characterising the different subsets of exotic aromatic B-series and in particular the exotic B-series, as they play an important role in stochastic numerical analysis~\cite{Laurent20eab}.
We give the following characterisation using categorical equivariance properties.
\begin{theorem}
\label{theorem:isometric_equivariance_EAB}
Let~$\varphi=(\varphi_d\colon \mathfrak{X}(\R^d)\rightarrow\mathfrak{X}(\R^d))_d$ be a local sequence of smooth maps.
Then, the Taylor expansion of~$\varphi$ around the trivial vector field~$0$ is:
\begin{itemize}
\item a stolonic B-series if and only if~$\varphi$ is left-orthogonal-equivariant,
\item an exotic B-series if and only if~$\varphi$ is right-orthogonal-equivariant,
\item a B-series if and only if~$\varphi$ is semi-orthogonal-equivariant,
\item a connected exotic aromatic B-series if and only if~$\varphi$ is orthogonal-equivariant and decoupling.
\end{itemize}
In particular, affine equivariance and semi-orthogonal-equivariance are equivalent as they both characterise B-series.
\end{theorem}

Theorem~\ref{theorem:isometric_equivariance_EAB} is a natural result in the sense that the larger the set of equivariant transformations, the smaller the associated class of Butcher series. For instance, as semi-orthogonal equivariance implies right-orthogonal equivariance, the exotic B-series contain standard B-series.

\begin{remark}
The decoupling property exactly corresponds to the connectedness of the graphs involved in the expansion.
This link was first observed in~\cite{McLachlan16bsm} in the context of aromatic B-series, where a decoupling aromatic B-series is showed to be a connected aromatic B-series, i.e., a standard B-series.
Observe also that the elementary differential of an exotic aromatic tree can be factored through its connected components: let~$\mu_1$, \dots,~$\mu_m\in \Gamma_c^0$,~$\tau\in \Gamma_c$ and the exotic aromatic tree~$\gamma=\mu_1\dots\mu_m \tau$, then
\begin{equation}
\label{equation:factorisation_connected_components}
F_d(\gamma)(f)(x)=F_d(\mu_1)(f)(x)\dots F_d(\mu_m)(f)(x)F_d(\tau)(f)(x).
\end{equation}
\end{remark}


The classification of exotic aromatic B-series is summarised in Table~\ref{table:classification of B-series}.
To the best of our knowledge, the equivalence of affine-equivariance and semi-orthogonal-equivariance is a new non-trivial result.
We derive in Subsection~\ref{section:degeneracies_2} a simplified characterisation related to the numerical analysis literature under a regularity assumption on the vector fields.
\begin{table}[!htb]
	\setcellgapes{2pt}
	\centering
	\begin{tabular}{|c|c|}
	\hline
	geometric property & associated Butcher series \\
	\hhline{|=|=|}
	orthogonal-equivariance & exotic aromatic B-series \\
	\hline
	$\GL$-equivariance & aromatic B-series \\
	\hline
	left-orthogonal-equivariance & stolonic B-series \\
	\hline
	right-orthogonal-equivariance & exotic B-series \\
	\hline
	affine/semi-orthogonal-equivariance & B-series\\
	\hline
	\end{tabular}
	\caption{Classification of B-series with respect to their equivariance properties (see Theorems~\ref{theorem:orthogonal_equivariance_EAB} and~\ref{theorem:isometric_equivariance_EAB}).}
	\label{table:classification of B-series}
	\setcellgapes{1pt}
\end{table}

\section{Geometric characterisation of exotic aromatic B-series}
\label{section:proof}

This section is devoted to the proof of Theorem~\ref{theorem:orthogonal_equivariance_EAB}.
Following~\cite{MuntheKaas16abs}, we first restrict our study to symmetric multilinear local equivariant maps defined on the infinite jet bundle at one point in Section~\ref{section:decomposition_invariant_subspaces}. We then decompose our space into invariant tensor spaces using the invariant tensor theorem. In Section~\ref{section:surjection_Exotic}, we draw a one-to-one correspondence between tensors in the invariant spaces and exotic aromatic trees.
Section~\ref{section:conclusion_proof} contains the proof of Theorem~\ref{theorem:orthogonal_equivariance_EAB} and a clarifying example.

\subsection{Invariant tensor spaces and transfer of geometric properties}
\label{section:decomposition_invariant_subspaces}

Let~$\varphi_d\colon\mathfrak{X}(\R^d)\rightarrow\mathfrak{X}(\R^d)$ be a smooth local~$G$-equivariant map with~$G=H\ltimes \R^d$.
The Taylor expansion of~$\varphi_d$ around the vector field~$0$ is
\begin{equation}
\label{equation:Taylor_exp}
\sum_{m\geq 1} \frac{1}{m!} D^m\varphi_d(0)(f,\dots,f),
\end{equation}
where~$\varphi_d(0)=0$ by locality~\cite[Lem.\ts 6.1]{McLachlan16bsm}.
We denote~$M$ the tangent space of~$\R^d$ at the origin~$0$ (note that~$M\equiv \R^d$),~$T^m M=M\otimes \dots \otimes M$ the tensor product of~$m$ copies of~$M$, and~$S^m M$ the symmetrised tensor product.
Following the transfer argument~\cite[Thm.\ts 3.9]{MuntheKaas16abs}, the~$m$-th Taylor term~$D^m\varphi_d(0)$ inherits the locality and orthogonal-equivariance properties.
Moreover, the Peetre theorem~\cite[\S\ts 19.9]{Kolar93noi} and the equivariance~\cite[Thm.\ts 5.6]{MuntheKaas16abs} allow us to assume without loss of generality that the~$m$-th Taylor term is in the space of multilinear symmetric local~$H$-equivariant maps:
\[D^m\varphi_d(0)\in \LL_H(S^m(M\otimes SM^\ast),M),\]
where the action of~$H$ on~$S^m(M\otimes SM^\ast)$ is the natural action induced on tensor spaces, and for a vector space~$V$,~$S V:=\bigoplus_{j=0}^\infty S^j V$ is the symmetric algebra.
This result works for any isotropy group~$H$.

Given~$\kappa\colon\N\rightarrow\N$, we define the tensor space~$\TT_\kappa$ and its symmetric counterpart~$\SS_\kappa$ by
\[
\TT_\kappa=M\otimes \bigotimes_{j=0}^\infty T^{\kappa(j)}(M^\ast\otimes T^j M), \quad
\SS_\kappa=M\otimes \bigotimes_{j=0}^\infty S^{\kappa(j)}(M^\ast\otimes S^j M),
\]
and their~$H$-invariant subspaces~$\TT_\kappa^H$ and~$\SS_\kappa^H$.
Then,~\cite[Thm.\ts 5.6]{MuntheKaas16abs} gives the isomorphism
\[
\LL_H(S^m(M\otimes SM^\ast),M)\equiv \bigoplus_{\abs{\kappa}=m}\SS_\kappa^H.
\]
In the affine case~$H=\GL_d(\R)$, it is shown in~\cite[Thm.\ts 6.3]{MuntheKaas16abs} with the description of~$H$-invariant tensors~\cite[\S\ts 24.3]{Kolar93noi} that~$\LL_H(S^m(M\otimes SM^\ast),M)$ is a finite dimensional space. In the orthogonal case~$H=\Or_d(\R)$, this property does not hold in general, and we get the following instead.
\begin{theorem}
\label{theorem:scaling_property_O}
Let~$H=\Or_d(\R)$ and~$m$ a positive integer, the following isomorphism holds
\[
\LL_H(S^m(M\otimes SM^\ast),M) \equiv \bigoplus_{\underset{\abs{\kappa}+\abs{\boldsymbol{\kappa}'}+1 \in 2\Z}{\abs{\kappa}=m}}\SS_\kappa^H.
\]
\end{theorem}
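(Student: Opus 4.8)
The plan is to derive the statement from the general isomorphism $\LL_H(S^m(M\otimes SM^\ast),M)\equiv\bigoplus_{\abs{\kappa}=m}\SS_\kappa^H$ of \cite[Thm.\ts 5.6]{MuntheKaas16abs}, which holds for an arbitrary isotropy group $H$ and in particular for $H=\Or_d(\R)$. It then suffices to show that a summand $\SS_\kappa^H$ vanishes whenever the parity condition fails, i.e.\ whenever $\abs{\kappa}+\abs{\boldsymbol{\kappa}'}+1$ is odd; deleting these zero summands from the general decomposition leaves exactly the direct sum over the compositions with $\abs{\kappa}+\abs{\boldsymbol{\kappa}'}+1\in 2\Z$.

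To prove the vanishing, I would first use the feature that separates the orthogonal case from the $\GL$ case: the standard inner product on $M=\R^d$ is $\Or_d(\R)$-invariant and hence yields an $\Or_d(\R)$-equivariant isomorphism $M^\ast\cong M$. Identifying every covariant slot through this isomorphism turns $\TT_\kappa$ into an ordinary tensor power $M^{\otimes N}$, where $N$ is the total number of tensor factors. Counting slots gives $N=\abs{\kappa}+\abs{\boldsymbol{\kappa}'}+1$: the leading copy of $M$ contributes $1$, each of the $\sum_j\kappa(j)=\abs{\kappa}$ copies of $M^\ast\otimes S^jM$ contributes one factor $M^\ast\cong M$, and the symmetric powers $S^jM$ contribute $\sum_j j\kappa(j)=\abs{\boldsymbol{\kappa}'}$ factors. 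Since symmetrisation is $H$-equivariant, $\SS_\kappa$ is a direct summand of $\TT_\kappa$ as an $H$-module, so $\SS_\kappa^H\subseteq\TT_\kappa^H\cong(M^{\otimes N})^{\Or_d(\R)}$, and it is enough to see that the latter is trivial for odd $N$.

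The key input is then the first fundamental theorem of invariant theory for the full orthogonal group: the invariants in $M^{\otimes N}$ are spanned by the complete pairings of the $N$ slots by copies of the metric, that is, by products of Kronecker deltas. A complete pairing exists only when $N$ is even, so $(M^{\otimes N})^{\Or_d(\R)}=0$ for odd $N$, which is exactly the claimed parity condition since $N=\abs{\kappa}+\abs{\boldsymbol{\kappa}'}+1$. The step I expect to demand the most care is the restriction to the full orthogonal group rather than to $\mathrm{SO}_d(\R)$: in the special orthogonal case the Levi-Civita tensor supplies additional invariants in odd tensor powers in odd dimension, which would destroy the parity count. Allowing reflections removes them, as the Levi-Civita tensor transforms by $\det\in\{\pm1\}$ and is not $\Or_d(\R)$-invariant, leaving only metric contractions. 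As a consistency check, Lemma~\ref{lemma:def_order} shows that the surviving condition reads $N=2\abs{\gamma}$, so the retained compositions are precisely those realised by genuine exotic aromatic trees, anticipating the correspondence of the next subsection.
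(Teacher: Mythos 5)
Your proposal is correct and follows essentially the same route as the paper: it combines the general decomposition $\LL_H(S^m(M\otimes SM^\ast),M)\equiv\bigoplus_{\abs{\kappa}=m}\SS_\kappa^H$ of \cite[Thm.\ts 5.6]{MuntheKaas16abs} with the vanishing of $\TT_\kappa^{\Or_d(\R)}$ when $\abs{\kappa}+\abs{\boldsymbol{\kappa}'}+1$ is odd, using the natural injection $\SS_\kappa^H\hookrightarrow\TT_\kappa^H$. Your explicit slot count, the equivariant identification $M^\ast\cong M$, and the remark distinguishing $\Or_d(\R)$ from $\mathrm{SO}_d(\R)$ simply spell out the content of the invariant tensor theorem that the paper delegates to the citation \cite[\S\ts 33.2]{Kolar93noi}.
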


\begin{proof}
Following the description of~$\Or_d(\R)$-invariant tensors~\cite[\S\ts 33.2]{Kolar93noi}, we deduce that~$\TT_\kappa^H$ is trivial when~$\abs{\kappa}+\abs{\boldsymbol{\kappa}'}+1$ is odd. As~$\SS_\kappa^H$ is naturally injected into~$\TT_\kappa^H$, we obtain the desired result.
\end{proof}

We write~$\Delta^m\varphi_d \colon\mathfrak{X}(\R^d)\rightarrow\mathfrak{X}(\R^d)$ the term of order~$m\geq 1$ in the expansion of~$\varphi_d$ around the vector field~$0$, that is,
\[\Delta^m\varphi_d(f)=D^m\varphi_d(0)(f,\dots,f).\]
Thanks to Theorem~\ref{theorem:scaling_property_O},~$\Delta^m\varphi_d$ has the following form,
\begin{equation}
\label{equation:Taylor_expansion_varphi}
\Delta^m\varphi_d(f)=\sum_{\underset{\abs{\kappa}+\abs{\boldsymbol{\kappa}'}+1 \in 2\Z}{\abs{\kappa}=m}} \psi_{\kappa,d}(f^\kappa),\quad f^\kappa=(\underbrace{f,\dots,f}_{\kappa(0)},\underbrace{f',\dots,f'}_{\kappa(1)},\dots),
\end{equation}
where only a finite number of the~$\psi_{\kappa,d}\in \SS_\kappa^{\Or_d(\R)}$ are non-zero.

\begin{lemma}
Let~$\varphi$ be a local orthogonal-equivariant sequence of smooth maps.
The categorical equivariance, decoupling, or trivially decoupling properties of~$\varphi$ are transferred to the Taylor terms~$\Delta^m\varphi$ in~\eqref{equation:Taylor_expansion_varphi}.
\end{lemma}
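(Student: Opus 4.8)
The plan is to exploit that the degree-$m$ Taylor term $\Delta^m\varphi_d(f)=D^m\varphi_d(0)(f,\dots,f)$ is homogeneous of degree $m$ in $f$, so that for any scalar $t$ the smooth curve $t\mapsto\varphi_d(tf)$ has Taylor expansion
\[
\varphi_d(tf)=\sum_{m\geq 1} t^m\,\Delta^m\varphi_d(f),
\]
and $\Delta^m\varphi_d(f)$ is recovered as the coefficient of $t^m$, equivalently as $\frac{1}{m!}\frac{d^m}{dt^m}\varphi_d(tf)\big|_{t=0}$. Each of the three properties --- strong equivariance, decoupling, trivially decoupling --- is an identity between values of the maps $\varphi_d$ that is \emph{linear} in $\varphi$ and whose hypotheses are \emph{stable under the joint rescaling} $f\mapsto tf$ of all the vector fields it involves. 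The common mechanism is therefore to replace every vector field by its $t$-multiple, invoke the property for $\varphi$, expand both sides in $t$, and match the coefficients of $t^m$; this produces the same property for $\Delta^m\varphi$.

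First I would treat the two decoupling statements, which carry no hypothesis. For trivial decoupling, using $t(f\oplus 0)=(tf)\oplus 0$ and applying the property to $\varphi$ gives $\varphi_{d_1+d_2}(t(f\oplus 0))=\varphi_{d_1}(tf)\oplus 0$; since the linear map $v\mapsto v\oplus 0$ commutes with extracting the $t^m$-coefficient, matching coefficients yields $\Delta^m\varphi_{d_1+d_2}(f\oplus 0)=\Delta^m\varphi_{d_1}(f)\oplus 0$. For decoupling the key identity is $t(f_1\oplus f_2)=(tf_1)\oplus(tf_2)$, so decoupling of $\varphi$ gives $\varphi_{d_1+d_2}(t(f_1\oplus f_2))=\varphi_{d_1}(tf_1)\oplus\varphi_{d_2}(tf_2)$; reading off the $t^m$-coefficient componentwise (the $\R^{d_1}$- and $\R^{d_2}$-parts of a direct sum expand independently) gives $\Delta^m\varphi_{d_1+d_2}(f_1\oplus f_2)=\Delta^m\varphi_{d_1}(f_1)\oplus\Delta^m\varphi_{d_2}(f_2)$ for arbitrary $f_1,f_2$.

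Next I would handle strong equivariance with respect to any subset $\HH$. The only point to check is that the relatedness hypothesis survives rescaling: if $f_2(a(x))=Af_1(x)$ for all $x$, then by linearity $(tf_2)(a(x))=A(tf_1)(x)$, so $(tf_1,tf_2)$ is again an $a$-related pair for the same $a(x)=Ax+b\in\HH(\R^{d_1},\R^{d_2})$. Applying strong equivariance of $\varphi$ to this pair gives $\varphi_{d_2}(tf_2)(a(x))=A\varphi_{d_1}(tf_1)(x)$, and expanding in $t$ and matching the $t^m$-coefficients produces $\Delta^m\varphi_{d_2}(f_2)(a(x))=A\Delta^m\varphi_{d_1}(f_1)(x)$, which is exactly strong $\HH$-equivariance of $\Delta^m\varphi$. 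The Stiefel, Grassmann, semi-orthogonal, and affine cases follow as immediate specializations.

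The argument is light and I do not anticipate a genuine obstacle; the only thing that requires care is verifying, for each property, that its defining hypothesis is preserved under the \emph{simultaneous} scaling $f\mapsto tf$ rather than independent scalings --- this is what forces one to scale all involved fields by a single parameter $t$, and is precisely why the conclusions split off degree by degree. A secondary point I would state explicitly is the legitimacy of matching Taylor coefficients: since $\varphi_d$ is smooth, $t\mapsto\varphi_d(tf)$ is smooth near $t=0$, and equality of two smooth curves in $t$ forces equality of all their Taylor coefficients, so each graded identity holds as an identity of maps on $\mathfrak{X}(\R^d)$.
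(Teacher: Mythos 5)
Your proposal is correct and follows essentially the same route as the paper: the paper extracts the Taylor terms via $\Delta^m\varphi_d(f)=\partial_{t_1\dots t_m}\varphi_d((t_1+\dots+t_m)f)\big\vert_{t=0}$ and pushes each property through the scaled argument (citing \cite{McLachlan16bsm} for the equivariance case), which is the multi-parameter version of your single-parameter coefficient matching in $\varphi_d(tf)$, and your key observation --- that each property's hypothesis is stable under the simultaneous scaling $f\mapsto tf$ with the affine map $a$ left unscaled --- is exactly the mechanism of the cited argument. The only cosmetic discrepancy is a normalization: with the paper's convention $\Delta^m\varphi_d(f)=D^m\varphi_d(0)(f,\dots,f)$, the expansion reads $\varphi_d(tf)=\sum_{m\geq 1}\frac{t^m}{m!}\Delta^m\varphi_d(f)$, but the missing $\frac{1}{m!}$ does not affect the transfer of any of the properties.
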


\begin{proof}
The proof for the transfer of equivariance properties is the same as in~\cite[Prop.\ts 6.2]{McLachlan16bsm}.
Let us prove the transfer of the decoupling property. The transfer of the trivially decoupling property uses the same arguments.
The Taylor terms of~$\varphi$ satisfy~\cite[\S\ts 5.11]{Kriegl97tcs}
\[\Delta^m\varphi_d(f)=\partial_{t_1\dots t_m} \varphi_d((t_1+\dots_+t_m)f)\Big \vert_{t_1=\dots=t_m=0}.\]
Thus, we find
\begin{align*}
\Delta^m\varphi_d(f_1\oplus f_2)&=\partial_{t_1\dots t_m} \varphi_d((t_1+\dots_+t_m)f_1\oplus(t_1+\dots_+t_m)f_2)\Big \vert_{t_1=\dots=t_m=0}\\
&=\partial_{t_1\dots t_m} \Big[ \varphi_d((t_1+\dots_+t_m)f_1)\oplus \varphi_d((t_1+\dots_+t_m)f_2)\Big] \Big\vert_{t_1=\dots=t_m=0}\\
&=\Delta^m\varphi_d(f_1)\oplus \Delta^m\varphi_d(f_2).
\end{align*}
Hence the result.
\end{proof}

Thus, we restrict our study for the rest of the paper to the~$\Delta^m\varphi=(\Delta^m\varphi_d)_d$ that can be expressed as finite sums of tensors in~$\SS_\kappa^{\Or_d(\R)}$.
We show in Subsection~\ref{section:surjection_Exotic} that the expansion~\eqref{equation:Taylor_expansion_varphi} corresponds to the order~$m$ term of an exotic aromatic B-series.

%

\subsection{Correspondence between exotic aromatic trees and invariant tensors}
\label{section:surjection_Exotic}

Let us now draw a correspondence between exotic aromatic trees and tensors in~$\SS_\kappa^{\Or_d(\R)}$.
\begin{theorem}
\label{theorem:surjection_EAF}
For a given~$\kappa$, there exists a surjective linear map~$\widetilde{\FF}_d\colon \Span(\Gamma_\kappa)\rightarrow \SS_\kappa^{\Or_d(\R)}$.
The map~$\widetilde{\FF}_d$ is a bijection if and only if~$2d\geq \abs{\kappa}+\abs{\boldsymbol{\kappa}'}+1$.
Moreover, the elementary differential map~$\FF_d$ is injective on~$\Span(\Gamma_\kappa)$ if~$2d\geq \abs{\kappa}+\abs{\boldsymbol{\kappa}'}+1$.
\end{theorem}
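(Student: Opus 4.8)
The plan is to construct the map $\widetilde{\FF}_d$ explicitly from the elementary differential, establish surjectivity by invoking the classification of $\Or_d(\R)$-invariant tensors, and then analyse injectivity via a dimension count that hinges on the identity $\abs{\kappa}+\abs{\boldsymbol{\kappa}'}+1 = 2\abs{\gamma}$ from Lemma~\ref{lemma:def_order}. The natural starting point is to reinterpret the combinatorial data of an exotic aromatic tree $\gamma \in \Gamma_\kappa$ as the recipe for a tensor in $\SS_\kappa^{\Or_d(\R)}$: each node $v$ of fertility $j$ contributes a factor in $M^\ast \otimes S^j M$, the source permutation $\sigma$ prescribes which indices are contracted against one another via $\delta_{i_\sigma}$, and the distinguished arrow $\Circled{0}$ marks the free output index living in $M$. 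I would define $\widetilde{\FF}_d(\gamma)$ to be precisely this tensor, so that $\FF_d = \ev \circ \widetilde{\FF}_d$ where $\ev$ evaluates a symmetric tensor on $f^\kappa$; linearity on $\Span(\Gamma_\kappa)$ is then immediate.

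For surjectivity, I would appeal to the first fundamental theorem for the orthogonal group, cited in the excerpt as the description of $\Or_d(\R)$-invariant tensors~\cite[\S\ts 33.2]{Kolar93noi}: every invariant tensor in $\TT_\kappa^H$ is a linear combination of complete contractions built from Kronecker deltas (this is exactly the structure that forces $\abs{\kappa}+\abs{\boldsymbol{\kappa}'}+1$ to be even, as already used in Theorem~\ref{theorem:scaling_property_O}). Each such contraction pattern is a pairing of all the tensor slots, which is precisely the combinatorial content of a fixed-point-free involution $\sigma$ together with the target data $\tau$; hence every generator of $\SS_\kappa^{\Or_d(\R)}$ is the image of some exotic aromatic tree, and symmetrising recovers the symmetric (as opposed to merely tensorial) invariants. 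This yields surjectivity of $\widetilde{\FF}_d$ onto $\SS_\kappa^{\Or_d(\R)}$ for every $d$.

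The injectivity half is the crux, and here the Second Fundamental Theorem for $\Or_d(\R)$ is the decisive tool: the contraction monomials that generate the invariant tensors are \emph{linearly independent} precisely as long as the dimension $d$ is large enough to avoid the syzygies coming from the vanishing of an antisymmetrised $(d+1)$-fold Kronecker expression. The relevant threshold is that the total number of index pairs being contracted not exceed $d$; since $\sigma$ is an involution pairing the $\abs{V}+\abs{\Circled{A_0}} = \abs{\kappa}+\abs{\boldsymbol{\kappa}'}+1$ slots into $\abs{\gamma}$ pairs (using Lemma~\ref{lemma:def_order}), the independence of distinct pairings holds exactly when $d \geq \abs{\gamma}$, i.e.\ when $2d \geq \abs{\kappa}+\abs{\boldsymbol{\kappa}'}+1$. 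I expect the main obstacle to be the careful bookkeeping translating the abstract SFT bound into this explicit tree-theoretic inequality, and in particular verifying that distinct trees in $\Gamma_\kappa$ really do give distinct pairings (so that equivalence of graphs matches equality of tensors) without introducing spurious coincidences from the symmetrisation or from the root slot.

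With the bijectivity of $\widetilde{\FF}_d$ in the regime $2d \geq \abs{\kappa}+\abs{\boldsymbol{\kappa}'}+1$ in hand, the final claim about $\FF_d$ follows by factoring $\FF_d = \ev \circ \widetilde{\FF}_d$ and checking that $\ev$ is itself injective on $\SS_\kappa^{\Or_d(\R)}$ in this dimension range. Concretely, a nonzero symmetric invariant tensor cannot vanish on the diagonal $f^\kappa$ for generic $f$ when $d$ is large, since the polarisation identity recovers the full symmetric multilinear map from its values on $f^\kappa$; thus $\ev$ restricted to the image is injective, and composing two injections gives the injectivity of $\FF_d$ on $\Span(\Gamma_\kappa)$ under the stated dimension hypothesis, completing the proof.
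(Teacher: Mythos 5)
Your proposal follows essentially the same route as the paper's proof: construct $\widetilde{\FF}_d$ as the contraction tensor prescribed by $\sigma$ and $\tau$, obtain surjectivity from the first fundamental theorem for $\Or_d(\R)$-invariants (every invariant is a combination of complete Kronecker pairings), obtain the bijectivity threshold $2d\geq\abs{\kappa}+\abs{\boldsymbol{\kappa}'}+1$ from Weyl's second fundamental theorem via Lemma~\ref{lemma:def_order}, and deduce injectivity of $\FF_d$ by factoring through evaluation on jets and polarization. The bookkeeping you flag as the main obstacle is precisely what the paper formalises with the stabiliser $G_\kappa$ of the target map acting simultaneously on permutations and on tensor slots, inducing $\widetilde{\FF}_d$ on the quotients $\Span(\Sigma_\kappa)/K_\Sigma\rightarrow\TT_\kappa^{\Or_d(\R)}/K_\otimes$, so that graph equivalence matches symmetrisation exactly.
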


\begin{proof}
\textbf{Decomposition of~$\TT_\kappa$.}
Following Theorem~\ref{theorem:scaling_property_O}, we assume without loss of generality that~$\abs{\kappa}+\abs{\boldsymbol{\kappa}'}+1 =2d_0$ is even.
We rewrite~$\TT_\kappa$ as
\[
\TT_\kappa=M\otimes \bigotimes_{j=0}^\infty \bigotimes_{i=1}^{\kappa(j)} T_i^j, \quad T_i^j=M^\ast\otimes T^j M.
\]
We number the~$2d_0$ components of~$\TT_\kappa$ in the following way. The copies of~$M^\ast$ in~$\TT_\kappa$ are numbered in an arbitrary manner from~$1$ to~$\abs{\kappa}$ and the copies of~$M$ from~$\Circled{0}$ to~$\abs{\Circled{\boldsymbol{\kappa}'}}$ so that
\[
\TT_\kappa=M_{\Circled{0}}\otimes \bigotimes_{j=0}^\infty \bigotimes_{i=1}^{\kappa(j)} T_i^j.
\]
If the numbering is given by
\[
T_i^j=M^\ast_n\otimes M_{\Circled{n_1}}\otimes \cdots \otimes M_{\Circled{n_j}},
\]
then we write
\[\tau(\Circled{n_k})=n,\quad k=1,\dots,j.\]
This defines the target map~$\tau\colon \Circled{A} \rightarrow V$, the arrows~$\Circled{A}=\{\Circled{1},\dots,\Circled{\abs{\boldsymbol{\kappa}'}}\}$,~$\Circled{A_0}=\{\Circled{0}\}\cup \Circled{A}$, and the vertices~$V=\{1,\dots,\abs{\kappa}\}$.

We now define the linear maps~$\omega$,~$\pi$, and~$\delta$ in order to obtain the following diagram.
\begin{center}
\begin{tikzcd}
\TT_\kappa^{\Or_d(\R)} \arrow[r, twoheadrightarrow, "\pi"] 
& \SS_\kappa^{\Or_d(\R)} \\
\Span(\Sigma_\kappa) \arrow[r, twoheadrightarrow, "\omega"] \arrow[u, twoheadrightarrow, "\delta"] 
& \Span( \Gamma_\kappa )  
\end{tikzcd}
\end{center}

\textbf{Definition of~$\omega$.}
We denote~$\Sigma_\kappa$ the set of permutations~$\sigma$ of the set~$V\cup \Circled{A_0}$ that have no fixed point and that satisfy~$\sigma\circ\sigma=\text{id}$.
Given~$\sigma\in \Sigma_\kappa$ and the target map~$\tau$, there exists a unique exotic aromatic tree~$(V,\Circled{A_0},\sigma,\tau)\in \Gamma_\kappa$ according to Definition~\ref{definition:exotic_aromatic_trees}.
This yields a map~$\omega\colon \Sigma_\kappa\rightarrow \Gamma_\kappa$. We extend this map by linearity to obtain~$\omega\colon \Span( \Sigma_\kappa) \rightarrow \Span( \Gamma_\kappa)$.

\textbf{Definition of~$\pi$.}
The projection map~$\pi\colon \TT_\kappa\rightarrow\SS_\kappa$ is compatible with the action of~$\Or_d(\R)$. Thus, it induces a surjective linear map (still denoted~$\pi$ for simplicity) from~$\TT_\kappa^{\Or_d(\R)}$ to~$\SS_\kappa^{\Or_d(\R)}$.

\textbf{Definition of~$\delta$.}
Using the isomorphism~$\TT_\kappa \equiv \LL(\bigotimes_{n\in V\cup \Circled{A_0}} M_n,\R)$, we define~$\delta(\sigma)$ for a permutation~$\sigma\in\Sigma_\kappa$ by
\[
\delta(\sigma)(v)=\prod_{\underset{j=\sigma(i), i<j}{i,j\in V\cup \Circled{A_0}}}(v_{i},v_j), \quad v=\bigotimes_{n\in V\cup \Circled{A_0}} v_n \in \bigotimes_{n\in V\cup \Circled{A_0}} M_n,
\]
where~$(.,.)$ is the standard scalar product in~$\R^d$ and where we fixed an arbitrary total order on~$V\cup \Circled{A_0}$.
We extend~$\delta$ by linearity on~$\Span( \Sigma_\kappa)$.
The surjectivity of~$\delta$ is a consequence of the~$\Or_d(\R)$-invariant tensor theorem~\cite[\S\ts 33.2]{Kolar93noi} (see also~\cite[Sec.\ts II.9]{Weyl39tcg} and~\cite[Sec.\ts 10.2]{Kraft96cit}).
Moreover,~$\delta$ is a bijection if and only if~$2d\geq \abs{\kappa}+\abs{\boldsymbol{\kappa}'}+1=\vert V\vert + \vert\Circled{A_0}\vert$ (see~\cite[Sec.\ts II.17]{Weyl39tcg}).

\textbf{Action of~$G_\kappa$.}
The target function~$\tau\colon \Circled{A}\rightarrow V$ is an element of~$V^{\Circled{A}}$. We denote~$\Sigma_{\Circled{A}}$ the set of permutations of the arrows in~$\Circled{A}$, respectively~$\Sigma_V$ the set of permutations of the nodes in~$V$, and~$\Sigma_{\Circled{A}} \times \Sigma_V$ the permutations of~$V\cup \Circled{A_0}$ that leave~$\Circled{0}$ fixed, permute the elements in~$V$, and the elements of~$\Circled{A}$ without mixing them.
The action of an element of~$g\in\Sigma_{\Circled{A}} \times \Sigma_V$ on~$\xi\in V^{\Circled{A}}$ is
\[g\cdot \xi=g\circ \xi \circ g^{-1}.\]
We denote~$G_\kappa$ the stabilizer of the target function~$\tau$, that is,
\[G_\kappa=\{g\in\Sigma_{\Circled{A}} \times \Sigma_V,\quad g\cdot \tau=\tau\}.\]
The permutations in~$G_\kappa$ represent the permutations of arrows and nodes that are compatible with the target map~$\tau$.

\textbf{Definition of~$K_\Sigma$.}
An element~$g\in \Sigma_{\Circled{A}} \times \Sigma_V$ acts naturally on~$\sigma\in\Sigma_\kappa$ by~$g\cdot \sigma=g\circ\sigma\circ g^{-1}$.
We observe that~$\omega(\sigma_1)=\omega(\sigma_2)$ if and only if there exists~$g\in G_\kappa$ such that~$\sigma_1=g\cdot \sigma_2$.
We define~$K_\Sigma$ as the vector subspace of~$\Span(\Sigma_\kappa)$ spanned by the~$g_1\cdot \sigma-g_2\cdot \sigma$ for~$g_1$,~$g_2\in G_\kappa$ and~$\sigma\in \Sigma_\kappa$. By definition,~$K_\Sigma$ is the kernel of~$\omega$, so that the following sequence is exact.
\begin{center}
\begin{tikzcd}
0 \arrow[r] & K_\Sigma \arrow[r, hookrightarrow] & \Span(\Sigma_\kappa) \arrow[r, twoheadrightarrow, "\omega"] & \Span( \Gamma_\kappa) \arrow[r] & 0
\end{tikzcd}
\end{center}

\textbf{Definition of~$K_\otimes$.}
Using the identification~$\TT_\kappa \equiv \LL(\bigotimes_{n\in V\cup \Circled{A_0}} M_n,\R)$, the action of an element of~$\Sigma_{\Circled{A}} \times \Sigma_V$ on~$\TT_\kappa$ is
\[(g\cdot \varphi)(v)=\varphi(\bigotimes_{n\in V\cup \Circled{A_0}} v_{g(n)}), \quad \varphi\in \LL(\bigotimes_{n\in V\cup \Circled{A_0}} M_n,\R), \quad v=\bigotimes_{n\in V\cup \Circled{A_0}} v_n.\]
We observe that by definition of~$\tau$,~$\pi(\varphi_1)=\pi(\varphi_2)$ if and only if~$\varphi_1=g\cdot \varphi_2$ with~$g\in G_\kappa$.
We define~$K_\otimes$ as the vector space spanned by the~$g_1\cdot\varphi-g_2\cdot\varphi$ for~$g_1$,~$g_2\in G_\kappa$,~$\varphi\in \TT_\kappa$.
By definition,~$K_\otimes$ is the kernel of~$\pi$, and is also the kernel of the restriction~$\pi\colon\TT_\kappa^{\Or_d(\R)}\rightarrow \SS_\kappa^{\Or_d(\R)}$.
We have the following exact sequence.
\begin{center}
\begin{tikzcd}
0 \arrow[r] & K_\otimes \arrow[r, hookrightarrow] & \TT_\kappa^{\Or_d(\R)} \arrow[r, twoheadrightarrow, "\pi"] & \SS_\kappa^{\Or_d(\R)} \arrow[r] & 0
\end{tikzcd}
\end{center}

\textbf{Definition of~$\widetilde{\FF}_d$.}
The action of~$G_\kappa$ commutes with~$\delta$, that is,~$\delta(g\cdot \sigma)=g\cdot\delta( \sigma)$.
Thus,~$\delta$ induces a linear map from~$K_\Sigma$ to~$K_\otimes$. By the fundamental theorem on homomorphisms, there exists a surjective map~$\widetilde{\delta}$ from~$\Span( \Sigma_\kappa)/K_\Sigma$ to~$\TT_\kappa^{\Or_d(\R)}/K_\otimes$.
We obtain the following diagram, where~$\widetilde{\FF}_d=\pi\circ\widetilde{\delta}\circ\omega^{-1}$ is surjective.
\begin{center}
\begin{tikzcd}
\TT_\kappa^{\Or_d(\R)}/K_\otimes \arrow[r, hook, two heads, "\pi"] 
& \SS_\kappa^{\Or_d(\R)} \\
\Span( \Sigma_\kappa)/K_\Sigma \arrow[r, hook, two heads, "\omega"] \arrow[u, twoheadrightarrow, "\widetilde{\delta}"] 
& \Span( \Gamma_\kappa) \arrow[u, "\widetilde{\FF}_d", two heads, dashed] 
\end{tikzcd}
\end{center}
The map~$\widetilde{\FF}_d$ is bijective if and only if~$\widetilde{\delta}$ is bijective, that is, if and only if~$2d\geq \abs{\kappa}+\abs{\boldsymbol{\kappa}'}+1$.
The elementary differential map~$\FF_d$ in Definition~\ref{definition:elementary_differential} is related to the map~$\widetilde{\FF}_d$ on~$\Span(\Gamma_\kappa)$ by
\[\FF_d(\gamma)(f)(x)=\widetilde{\FF}_d(\gamma)(f^\kappa),\quad f^\kappa(x)=(\underbrace{f(x),\dots,f(x)}_{\kappa(0)},\underbrace{f'(x),\dots,f'(x)}_{\kappa(1)},\dots), \quad f\in \mathfrak{X}(\R^d).\]
Thus, if~$\widetilde{\FF}_d$ is bijective on~$\Span(\Gamma_\kappa)$, then~$\FF_d$ is injective on~$\Span(\Gamma_\kappa)$.
\end{proof}

\begin{remark}
There exists a finite number of exotic aromatic trees of composition~$\kappa$, so that~$\SS_\kappa^{\Or_d(\R)}$ is finite-dimensional.
However, on the contrary of the~$\GL$-equivariance setting, there is an infinite number of exotic aromatic trees with a given number of nodes~$\abs{\kappa}$, so that~$\LL_{\Or_d(\R)}(S^m(M\otimes SM^\ast),M)$ is infinite-dimensional.
\end{remark}


\subsection{Characterisation of exotic aromatic B-series}
\label{section:conclusion_proof}

As first mentioned in~\cite[Sec.\ts 3.2.4]{Laurent21ata}, the exotic aromatic B-series satisfy geometric properties.
\begin{proposition}
\label{proposition:equivariance_ppty}
The exotic aromatic B-series are local, orthogonal-equivariant, and trivially decoupling.
\end{proposition}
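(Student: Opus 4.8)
The plan is to verify each of the three properties directly from the definitions, exploiting the explicit index formula for the elementary differential $\FF_d(\gamma)(f)$ in Definition~\ref{definition:elementary_differential}. Since an exotic aromatic B-series is a linear combination $B_d(b)=\sum_{\gamma} b(\gamma)\FF_d(\gamma)$, and all three properties (locality, orthogonal-equivariance, trivial decoupling) are preserved under linear combinations, it suffices to establish them for a single elementary differential $\FF_d(\gamma)$.

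First I would dispatch locality. The formula $\FF_d(\gamma)(f)(x)=\sum \prod_{v\in V} f^{i_v}_{i_{\tau^{-1}(\{v\})}}(x)\,\delta_{i_\sigma}\,\partial_{i_{\Circled{0}}}$ shows that $\FF_d(\gamma)(f)(x)$ is built entirely from the components of $f$ and its partial derivatives evaluated at the single point $x$. If $x\notin\supp(f)$, then $f$ vanishes in a neighbourhood of $x$, so every factor $f^{i_v}_{i_{\tau^{-1}(\{v\})}}(x)$ is zero (as $\abs{V}\geq 1$ for every exotic aromatic tree), whence $\FF_d(\gamma)(f)(x)=0$. This gives $\supp(\FF_d(\gamma)(f))\subset\supp(f)$.

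Next I would treat orthogonal-equivariance. For $g=(A,b)\in G$ with $A\in\Or_d(\R)$, I would substitute $(g\cdot f)(x)=Af(A^{-1}(x-b))$ into the formula and track how the factors transform. The key point is the chain rule: each application of $\partial$ to the components of $g\cdot f$ produces a factor of $A^{-1}=A^T$ on the lower (covariant) indices, while the top (contravariant) index carries a factor of $A$. Thus a $j$-th order derivative $(g\cdot f)^{i_v}_{i_{\tau^{-1}(\{v\})}}$ contributes one copy of $A$ and $j$ copies of $A^T$. Summing over all vertices, the transformation introduces one $A$ per copy of $M$ (indexed by the arrows) and the arrow sources from the derivatives. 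The essential mechanism is that the scalar-product contractions $\delta_{i_\sigma}$ are $\Or_d(\R)$-invariant: for $A\in\Or_d(\R)$, one has $\sum_{k,l} A_{ik}A_{jl}\delta_{kl}=\sum_k A_{ik}A_{jk}=(AA^T)_{ij}=\delta_{ij}$, so each Kronecker contraction absorbs exactly one pair $A^TA=I_d$ or $AA^T=I_d$. After cancelling all orthogonal matrices through the contractions, a single overall factor of $A$ survives on the free index $i_{\Circled{0}}$, matching the left action $g\cdot\FF_d(\gamma)(f)$. This is the step where orthogonality (rather than general invertibility) is essential, and is the main obstacle to a clean write-up: one must carefully bookkeep the pairing induced by $\sigma$ and confirm that every $A$/$A^T$ cancels except the one attached to the root arrow $\Circled{0}$.

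Finally, for trivial decoupling, I would set $h=f\oplus 0\in\mathfrak{X}(\R^{d_1+d_2})$ and compute $\FF_{d_1+d_2}(\gamma)(h)$ at a point $(x,0)$, or more precisely show $\FF_{d_1+d_2}(\gamma)(f\oplus 0)=\FF_{d_1}(\gamma)(f)\oplus 0$. Since every tree has at least one vertex, each monomial contains at least one factor $h^{i_v}_{\cdots}$; whenever a contravariant index $i_v$ ranges over the second block $\{d_1+1,\dots,d_1+d_2\}$, the corresponding component of $h=f\oplus 0$ vanishes (as the second block of $h$ is identically $0$), killing that term. Hence only indices in the first block $\{1,\dots,d_1\}$ survive, and the surviving sum reproduces exactly $\FF_{d_1}(\gamma)(f)$ on the first block while the output's second block is zero, which is precisely $\FF_{d_1}(\gamma)(f)\oplus 0$. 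Extending by linearity over $\gamma$ concludes all three claims for $B(b)$.
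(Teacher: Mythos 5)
Your proposal is correct and takes essentially the same approach as the paper: the paper likewise treats locality and trivial decoupling as immediate from Definition~\ref{definition:elementary_differential}, and proves orthogonal-equivariance by exactly the index computation you describe, substituting $(g\cdot f)(x)=Af(A^{-1}(x-b))$, letting each contraction in $\delta_{i_\sigma}$ absorb a pair of $A$-factors via $A^TA=I_d$, and leaving the single surviving factor $a_{i_{\Circled{0}},k_{\sigma(\Circled{0})}}$ on the root index to produce the left action. The only point your decoupling sketch glosses over is that second-block \emph{liana} (covariant) indices are killed not by a vanishing contravariant factor but because the surviving components of $f\oplus 0$ are independent of the second-block variables, so the corresponding partial derivatives vanish; this is immediate, consistent with the paper calling the property straightforward.
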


\begin{proof}
The locality and trivially decoupling properties are straightforward from Definition~\ref{definition:elementary_differential}.
Let~$g=(A,b)\in \Or_d(\R)\ltimes \R^d$ and~$\gamma\in \Gamma$, then
\begin{align*}
\FF_d(\gamma)(g\cdot f)(x)
&=\sum_{\underset{v\in V,\Circled{a}\in \Circled{A_0}}{i_v, i_{\Circled{a}}}} \prod_{v\in V} \sum_{\underset{v\in V,\Circled{a_0}\in \Circled{A}}{k_v, k_{\Circled{a_0}}}} a_{i_v,k_v} a_{i_{\tau^{-1}(\{v\})},k_{\tau^{-1}(\{v\})}} f_{k_{\tau^{-1}(\{v\})}}^{k_{v}} (A^{-1}x-b) \delta_{i_\sigma} \partial_{i_{\Circled{0}}}\\
&=\sum_{\underset{v\in V,\Circled{a}\in \Circled{A}}{i_{\Circled{0}},k_v, k_{\Circled{a}}}} a_{i_{\Circled{0}},k_{\sigma(\Circled{0})}} \prod_{v\in V} f_{k_{\tau^{-1}(\{v\})}}^{k_{v}} (A^{-1}x-b) \delta_{k_\sigma} \partial_{i_{\Circled{0}}}\\
&=(g\cdot \FF_d(\gamma)(f))(x),
\end{align*}
where~$a_{i_J,k_J}=\prod_{j\in J} a_{i_j,k_j}$ and we used that~$A^T A=I_d$.
By linearity, exotic aromatic B-series are orthogonal-equivariant.
\end{proof}

The trivially decoupling property characterises the sequences of elementary differentials associated to exotic aromatic trees independently of the dimension~$d$.
\begin{proposition}
\label{proposition:isomorphism_F}
Let~$\varphi=(\FF_d(\gamma_d))_d$ be trivially decoupling, with~$\gamma_d\in \Span(\Gamma_\kappa)$, then there exists a unique~$\gamma\in \Span(\Gamma_\kappa)$ such that~$\varphi=\FF(\gamma)$.
\end{proposition}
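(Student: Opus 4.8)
The plan is to combine the two facts already in hand: by Theorem~\ref{theorem:surjection_EAF} the elementary differential map $\FF_d$ is injective on $\Span(\Gamma_\kappa)$ as soon as $2d\geq\abs{\kappa}+\abs{\boldsymbol{\kappa}'}+1$, and by Proposition~\ref{proposition:equivariance_ppty} every sequence $\FF(\eta)$ with $\eta\in\Span(\Gamma_\kappa)$ is itself trivially decoupling. I would start by fixing a dimension $d_0$ with $2d_0\geq\abs{\kappa}+\abs{\boldsymbol{\kappa}'}+1$, so that $\FF_{d_0}$ is injective on $\Span(\Gamma_\kappa)$, and declaring the candidate to be $\gamma:=\gamma_{d_0}$. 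Uniqueness is then immediate, since any $\gamma'$ with $\FF(\gamma')=\varphi$ agrees with $\gamma$ in dimension $d_0$ and injectivity of $\FF_{d_0}$ forces $\gamma'=\gamma$.

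For existence I would check $\FF_d(\gamma)=\varphi_d$ one dimension at a time, always transporting the comparison to dimension $d_0$ through the trivial embeddings $\eta\mapsto\eta\oplus 0$. If $d\leq d_0$, I split $d_0=d+(d_0-d)$ and apply the trivially decoupling property of both $\FF(\gamma)$ and $\varphi$ to $f\oplus 0$, which gives the chain
\[\FF_d(\gamma)(f)\oplus 0=\FF_{d_0}(\gamma)(f\oplus 0)=\varphi_{d_0}(f\oplus 0)=\varphi_d(f)\oplus 0,\]
the central equality being merely $\FF_{d_0}(\gamma)=\FF_{d_0}(\gamma_{d_0})=\varphi_{d_0}$; discarding the trailing zero yields $\FF_d(\gamma)=\varphi_d$. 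If instead $d>d_0$, I split $d=d_0+(d-d_0)$ and run the same computation on test fields $f\in\mathfrak{X}(\R^{d_0})$, obtaining $\FF_{d_0}(\gamma_d)=\varphi_{d_0}=\FF_{d_0}(\gamma)$; now injectivity of $\FF_{d_0}$ upgrades this to the equality $\gamma_d=\gamma$ in $\Span(\Gamma_\kappa)$, so that $\FF_d(\gamma)=\FF_d(\gamma_d)=\varphi_d$.

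The one genuine subtlety---and the step I expect to be the main obstacle---is that one may not compare $\FF_d(\gamma)$ with $\varphi_d$ directly in a small dimension where $\FF_d$ has a nontrivial kernel and the representatives $\gamma_d$ are neither unique nor equal to one another; the argument must ferry every identity up to the dimension $d_0$ where injectivity is available, and it is precisely the trivially decoupling hypothesis that legitimises this transport in both directions (pushing an arbitrary $f\in\mathfrak{X}(\R^d)$ up when $d\leq d_0$, and pushing the test fields of $\R^{d_0}$ up when $d>d_0$). Once this mechanism is in place, the remainder is bookkeeping on the decompositions $\R^{d_1+d_2}=\R^{d_1}\oplus\R^{d_2}$.
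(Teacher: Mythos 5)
Your proof is correct and is essentially the paper's own argument: both combine the trivially decoupling hypothesis on~$\varphi$ with the trivially decoupling property of the elementary differential sequences (Proposition~\ref{proposition:equivariance_ppty}, or a direct inspection of Definition~\ref{definition:elementary_differential}) to transport every identity through~$f\mapsto f\oplus 0$ to the critical dimension~$d_0=(\abs{\kappa}+\abs{\boldsymbol{\kappa}'}+1)/2$, where the injectivity of~$\FF_{d_0}$ on~$\Span(\Gamma_\kappa)$ (Theorem~\ref{theorem:surjection_EAF}) both pins down~$\gamma=\gamma_{d_0}$ uniquely and forces~$\gamma_d=\gamma_{d_0}$ for~$d>d_0$. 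The only cosmetic difference is that the paper first records the uniform relation~$\FF_{d_1}(\gamma_{d_1})=\FF_{d_1}(\gamma_{d_2})$ for~$d_1\leq d_2$ and then specialises to~$d_0$, whereas you run the two cases~$d\leq d_0$ and~$d>d_0$ directly; your version even states the case split more carefully than the paper's final paragraph, which inadvertently swaps the roles of the two cases.
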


\begin{proof}
Let~$d_1\leq d_2$, then we have~$\FF_{d_2}(\gamma_{d_2})(f_1\oplus 0)=\FF_{d_1}(\gamma_{d_1})(f_1)\oplus 0$ for~$f_1\in \mathfrak{X}(\R^{d_1})$ as~$\varphi$ is trivially decoupling.
On the other hand, a close inspection of Definition~\ref{definition:elementary_differential} yields that~$\FF_{d_2}(\gamma_{d_2})(f_1\oplus 0)=\FF_{d_1}(\gamma_{d_2})(f_1)\oplus 0$, and we deduce
\begin{equation}
\label{equation:proof_F_surjectivity_dimensions}
\FF_{d_1}(\gamma_{d_1})=\FF_{d_1}(\gamma_{d_2}), \quad d_1\leq d_2.
\end{equation}
Let~$d_0=(\abs{\kappa}+\abs{\boldsymbol{\kappa}'}+1)/2$. For~$d\geq d_0$, equation~\eqref{equation:proof_F_surjectivity_dimensions} gives~$\FF_d(\gamma_d)=\FF_d(\gamma_{d_0})$.
For~$d\leq d_0$, equation~\eqref{equation:proof_F_surjectivity_dimensions} gives~$\FF_{d_0}(\gamma_{d_0})=\FF_{d_0}(\gamma_d)$. As~$\FF_{d_0}$ is injective on~$\Span(\Gamma_\kappa)$,~$\gamma_d=\gamma_{d_0}$.
Thus, we obtain~$\varphi=\FF(\gamma_{d_0})$.
The uniqueness of~$\gamma_{d_0}$ is a consequence of the injectivity of~$\FF_{d_0}$ on~$\Span(\Gamma_\kappa)$ (see Theorem~\ref{theorem:surjection_EAF}).
\end{proof}

Let us now prove the characterisation of exotic aromatic B-series.
\begin{proof}[Proof of Theorem~\ref{theorem:orthogonal_equivariance_EAB}]
Let~$\varphi_d\colon \mathfrak{X}(\R^d)\rightarrow \mathfrak{X}(\R^d)$ be a local and orthogonal-equivariant map.
Thanks to Theorem~\ref{theorem:scaling_property_O}, the term of order~$m$ in the Taylor expansion~\eqref{equation:Taylor_exp} of~$\varphi_d$ around the~$0$ vector field has the form~\eqref{equation:Taylor_expansion_varphi}.
Theorem~\ref{theorem:surjection_EAF} gives the existence of~$\gamma_{\kappa,d}\in \Span(\Gamma_\kappa)$ such that~$\psi_{\kappa,d}=\widetilde{\FF}_d(\gamma_{\kappa,d})$.
The Taylor expansion of~$\varphi_d$ around the~$0$ vector field thus is the exotic aromatic B-series:
\[\sum_{m\geq 1} \frac{1}{m!} \sum_{\underset{\abs{\kappa}+\abs{\boldsymbol{\kappa}'}+1 \in 2\Z}{\abs{\kappa}=m}} \FF_d(\gamma_{\kappa,d})(f)(x).\]
If, in addition,~$\varphi$ is trivially decoupling, Proposition~\ref{proposition:isomorphism_F} gives the existence of the coefficient map~$b\colon \Gamma\rightarrow\R$ such that~$\varphi=B(b)$.
\end{proof}

\begin{ex*}
Let us illustrate the tensor spaces and maps from the proof of Theorem~\ref{theorem:surjection_EAF} for the composition~$\kappa=(2,0,1)$ (in the spirit of the examples in~\cite{Markl08git,MuntheKaas16abs}). The tensor space has the form
\[
\TT_\kappa=M_{\Circled{0}}\otimes M_1^\ast \otimes M_2^\ast \otimes (M_3^\ast \otimes M_{\Circled{1}} \otimes M_{\Circled{2}}).
\]
The associated set of nodes and arrows are~$V=\{1,2,3\}$ and~$\Circled{A_0}=\{\Circled{0},\Circled{1},\Circled{2}\}$. The target map~$\tau\colon \Circled{A}\rightarrow V$ is given by~$\tau(\Circled{1})=\tau(\Circled{2})=3$. The stabilizer of~$\tau$ is
\[G_\kappa=\{\id,(1,2),(\Circled{1},\Circled{2}),(1,2)(\Circled{1},\Circled{2})\}.\]
We present the output of~$\omega$,~$\widetilde{\delta}$ and~$\widetilde{\FF}$ for the different~$\sigma\in\Sigma_\kappa$ in Table~\ref{table:example_values_proof_surjection_EAF}, where we gather together the~$G_\kappa$-orbits. We write~$\widetilde{\delta}(\sigma)$ as an element of 
\[
\TT_\kappa/K_\otimes\equiv\LL(T^2 M\otimes\LL(T^2 M, M),M)/K_\otimes,
\]
that is, for~$v$,~$w\in M$ and a bilinear map~$\zeta\in \LL(T^2 M, M)$, we have~$\widetilde{\delta}(\sigma)(v,w,\zeta)\in M$.
For~$\widetilde{\FF}$, we use the identification
\[
\SS_\kappa\equiv\LL(S^2 M\otimes\LL(S^2 M, M),M).
\]
Replacing~$v=w=f(x)$ and~$\zeta=f''(x)$ yields the elementary differential~$\FF(\gamma)(f)(x)$ of Definition~\ref{definition:elementary_differential}.
Note that the first two lines of Table~\ref{table:example_values_proof_surjection_EAF} are aromatic trees, and also appear in~\cite[Table 2]{MuntheKaas16abs}. It can be seen directly on the associated permutations~$\sigma$, as each arrow is paired with a node and vice versa.

\begin{table}[htb]
	\setcellgapes{2pt}
	\centering
	\begin{tabular}{|c|c|c|c|}
	\hline
	$\sigma\in\Sigma_\kappa$ &~$\gamma=\omega(\sigma)$ &~$\widetilde{\delta}(\sigma)(v,w,\zeta)$ &~$\widetilde{\FF}(\gamma)(v,w,\zeta)$ \\
	\hhline{|=|=|=|=|}
	$(\Circled{0},3)(\Circled{1},1)(\Circled{2},2)$ & \multirow{2}{*}{$\eatree{201}{000}{1}$} &~$\zeta^i(v,w) \partial_i$ &  \\
	$(\Circled{0},3)(\Circled{1},2)(\Circled{2},1)$ &  &~$\zeta^i(w,v)\partial_i$ &~$\zeta^i(v,w)\partial_i$ \\
	\hline
	$(\Circled{0},1)(\Circled{1},2)(\Circled{2},3)$ &  &~$\zeta^j(w,\partial_j)v^i\partial_i$ &  \\
	$(\Circled{0},1)(\Circled{1},3)(\Circled{2},2)$ &  &~$\zeta^j(\partial_j,w)v^i\partial_i$ &  \\
	$(\Circled{0},2)(\Circled{1},1)(\Circled{2},3)$ & \multirow{2}{*}{$\eatree{201}{100}{1}$} &~$\zeta^j(v,\partial_j)w^i\partial_i$ &  \\
	$(\Circled{0},2)(\Circled{1},3)(\Circled{2},1)$ &  &~$\zeta^j(\partial_j,v)w^i\partial_i$ &~$\frac{1}{2}(\zeta^j(w,\partial_j)v^i+\zeta^j(v,\partial_j)w^i)\partial_i$ \\
	\hline
	$(\Circled{0},3)(\Circled{1},\Circled{2})(1,2)$ &~$\eatree{201}{011}{1}$ &~$(v,w)\zeta^i(\partial_j,\partial_j)\partial_i$ &~$(v,w)\zeta^i(\partial_j,\partial_j)\partial_i$ \\
	\hline
	$(\Circled{0},1)(\Circled{1},\Circled{2})(2,3)$ & \multirow{2}{*}{$\eatree{201}{011}{2}$} &~$(w,\zeta(\partial_j,\partial_j))v^i\partial_i$ &  \\
	$(\Circled{0},2)(\Circled{1},\Circled{2})(1,3)$ &  &~$(v,\zeta(\partial_j,\partial_j))w^i\partial_i$ &~$\frac{1}{2}((w,\zeta(\partial_j,\partial_j))v^i+(v,\zeta(\partial_j,\partial_j))w^i)\partial_i$ \\
	\hline
	$(\Circled{0},\Circled{1})(\Circled{2},1)(2,3)$ &  &~$(w,\zeta(\partial_i,v))\partial_i$ &  \\
	$(\Circled{0},\Circled{1})(\Circled{2},2)(1,3)$ &  &~$(v,\zeta(\partial_i,w))\partial_i$ &  \\
	$(\Circled{0},\Circled{2})(\Circled{1},1)(2,3)$ & \multirow{2}{*}{$\eatree{201}{011}{3}$} &~$(w,\zeta(v,\partial_i))\partial_i$ &  \\
	$(\Circled{0},\Circled{2})(\Circled{1},2)(1,3)$ &  &~$(v,\zeta(w,\partial_i))\partial_i$ &~$\frac{1}{2}((w,\zeta(\partial_i,v))+(v,\zeta(\partial_i,w)))\partial_i$ \\
	\hline
	$(\Circled{0},\Circled{1})(\Circled{2},3)(1,2)$ & \multirow{2}{*}{$\eatree{201}{111}{1}$} &~$(v,w)\zeta^j(\partial_i,\partial_j)\partial_i$ &  \\
	$(\Circled{0},\Circled{2})(\Circled{1},3)(1,2)$ &  &~$(v,w)\zeta^j(\partial_j,\partial_i)\partial_i$ &~$(v,w)\zeta^j(\partial_j,\partial_i)\partial_i$ \\
	\hline
	\end{tabular}
	\caption{Outputs of the functions~$\omega$,~$\delta$ and~$\widetilde{\FF}$ appearing in the proof of Theorem~\ref{theorem:surjection_EAF} for the composition~$\kappa=(2,0,1)$ and target map~$\tau(\Circled{1})=\tau(\Circled{2})=3$. The bilinear map~$\zeta$ is assumed symmetric in the last column. The sums on all involved indices are omitted for simplicity.}
	\label{table:example_values_proof_surjection_EAF}
	\setcellgapes{1pt}
\end{table}

\end{ex*}

\section{Classification of exotic aromatic B-series}
\label{section:proof_strong_equiv}

This section is devoted to the proof of the finer classification of Theorem~\ref{theorem:isometric_equivariance_EAB}. The proof, presented in Subsection~\ref{section:strong_characterisations}, relies heavily on the use of new dual vector fields, that we introduce in Subsection~\ref{section:degeneracies}. We present the impact of degeneracies on the classification in Subsection~\ref{section:degeneracies_2}.

\subsection{Dual vector fields}
\label{section:degeneracies}


The exotic aromatic trees given in Definition~\ref{definition:exotic_aromatic_trees} produce independent elementary differentials. To the best of our knowledge, this is a new non-trivial result, that plays an important role in the proof of Theorem~\ref{theorem:isometric_equivariance_EAB}.
The standard approach for proving such a property is to consider dual vector fields, in the spirit of the works~\cite{Hairer06gni, Iserles07bsm, McLachlan16bsm, Laurent20eab}.
\begin{proposition}
\label{proposition:dual_vf}
Given an exotic aromatic tree or multi-aroma~$\gamma\in \Gamma^0\cup \Gamma$, index the coordinates of~$\R^{\abs{\gamma}}$ by the uplets in~$(V\cup \Circled{A_0})/\sigma$ (respectively in~$(V\cup \Circled{A})/\sigma$ if~$\gamma\in \Gamma^0$), where~$v$ and~$\sigma(v)$ are identified. This corresponds to the nodes~$(v,\Circled{a})\in V^{\diamond}$ that are not part of stolons, the stolons~$s=(v_1,v_2)\in S$, and the lianas~$l=(\Circled{a_1},\Circled{a_2})\in L$.
Let~$\theta^\gamma$ be the following parameter indexed by the standard nodes, the nodes in stolons, and the arrows in lianas,
\[\theta^\gamma=(\theta^{V^{\diamond}},\theta^{S},\theta^{L})
=(\theta_{1}^{V^{\diamond}},\dots,\theta_{\abs{V^{\diamond}}}^{V^{\diamond}},
\theta_{1}^{S},\dots,\theta_{2\abs{S}}^{S},
\theta_{\Circled{1}}^L,\dots,\theta_{\Circled{2\abs{L}}}^{L}).\]
Define the associated vector field~$f_\gamma^{(\theta^\gamma)}\in \mathfrak{X}(\R^{\abs{\gamma}})$ by
\begin{align*}
f_\gamma^{(\theta^\gamma),v}(x)&= \theta_{v}^{V^{\diamond}}\prod_{\tau(\Circled{a})=v} \theta_{\Circled{a}}^L x_{\Circled{a}},\\
f_\gamma^{(\theta^\gamma),s}(x)&= \theta_{v_1}^S \prod_{\tau(\Circled{a})=v_1} \theta_{\Circled{a}}^L x_{\Circled{a}}+\theta_{v_2}^S \prod_{\tau(\Circled{a})=v_2} \theta_{\Circled{a}}^L x_{\Circled{a}}, \quad s=(v_1,v_2)\in S,\\
f_\gamma^{(\theta^\gamma),l}(x)&=0,
\end{align*}
where an empty product is 1 and~$\theta_{\Circled{a}}^L=1$ if~$\Circled{a}\notin L$. By convention, if~$\gamma$ has a root, the coordinate of the root is the first one.
Let~$\gamma$,~$\hat{\gamma}\in \Gamma^0\cup \Gamma$, then 
\[
(\FF_{\abs{\hat{\gamma}}}(\gamma)(f_{\hat{\gamma}}^{(\theta^{\hat{\gamma}})}))^1_{\theta^\gamma}\Big\vert_{\theta=0}(0)= 0 \text{ if } \hat{\gamma}\neq \mu \gamma, \quad \mu\in \Gamma_0.
\]
In particular, the elementary differential map~$\FF$ is injective on~$\Span(\Gamma)$.
Moreover, for connected graphs~$\gamma$,~$\hat{\gamma}\in \Gamma^0_c\cup\Gamma_c$, we find
\[
(\FF_{\abs{\hat{\gamma}}}(\gamma)(f_{\hat{\gamma}}^{(\theta^{\hat{\gamma}})}))^1_{\theta^\gamma}\Big\vert_{\theta=0}(0)=\upsigma(\gamma)\neq 0 \text{ if and only if } \gamma= \hat{\gamma},
\]
where the constant~$\upsigma(\gamma)$ is the symmetry coefficient of~$\gamma$, that is, the number of bijections of the vertices and arrows of~$\gamma$ that preserve the graph structure.
\end{proposition}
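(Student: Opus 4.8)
The plan is to treat this as an independence statement proved by direct evaluation, in the spirit of the dual-vector-field arguments of~\cite{Hairer06gni,Iserles07bsm,McLachlan16bsm}. Since $f_{\hat\gamma}^{(\theta^{\hat\gamma})}$ is polynomial in $x$, I would first unfold Definition~\ref{definition:elementary_differential}: the first component $(\FF_{\abs{\hat\gamma}}(\gamma)(f_{\hat\gamma}^{(\theta^{\hat\gamma})}))^1$ is a finite sum indexed by the assignments $i$ that are constant on the $\sigma$-classes of $\gamma$ (this is precisely the constraint imposed by $\delta_{i_\sigma}$) and satisfy $i_{\Circled{0}}=1$. Each such assignment is a map $\phi$ from the $\sigma$-classes of $\gamma$ — its standard nodes, stolons and lianas — to the coordinate classes of $\hat\gamma$, sending the root of $\gamma$ (when present) to the root coordinate $1$ of $\hat\gamma$. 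The pairing is then a weighted count of those $\phi$ that survive differentiation, evaluation at $x=0$, and the extraction of the coefficient that is of degree one in each retained $\theta$.

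The second step is to determine which $\phi$ survive. Three properties of the dual field are decisive. First, its liana components vanish identically, so any $\phi$ sending a node of $\gamma$ to a liana class of $\hat\gamma$ contributes $0$. Second, each node (respectively stolon) component is a monomial (respectively a sum of two monomials) whose degree equals the in-degree of the target vertex in $\hat\gamma$, whereas the factor attached to $v$ in $\FF(\gamma)$ applies exactly $\abs{\tau^{-1}(\{v\})}$ derivatives, that is, the in-degree of $v$ in $\gamma$. Evaluating at $x=0$ therefore forces, for every vertex $v$, equality of the two in-degrees and coincidence as multisets of the differentiated coordinates with the coordinates occurring in the monomial. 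Unwinding the latter through $\tau_\gamma$ and $\tau_{\hat\gamma}$ shows that a surviving $\phi$ must carry the arrow--target incidences and the liana pairings of $\gamma$ onto those of $\hat\gamma$.

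The third step uses the parameters $\theta$ to promote this incidence-preserving correspondence into an isomorphism onto a sub-part of $\hat\gamma$. Each surviving factor contributes one parameter $\theta^{V^{\diamond}}$ or $\theta^{S}$ together with the $\theta^{L}$-weights consumed by its derivatives, so retaining only the part of degree one in each $\theta$ forces every parameter that is hit to be hit exactly once; hence $\phi$ is injective and structure-preserving on the portion of $\hat\gamma$ it reaches, and the unreached portion, carrying none of the retained parameters after setting $\theta=0$, must be a rootless union of connected components, i.e.\ a multi-aroma $\mu$ with $\hat\gamma=\mu\gamma$. This yields the first assertion. For connected $\gamma,\hat\gamma$ there is no room for a nontrivial $\mu$, so the surviving maps are exactly the isomorphisms $\gamma\to\hat\gamma$; these exist only when $\gamma=\hat\gamma$, in which case they are the graph automorphisms of $\gamma$, and the surviving assignments weighted by the factorials from the matched differentiations enumerate the automorphism group, summing to $\upsigma(\gamma)$. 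Injectivity of $\FF$ on $\Span(\Gamma)$ then follows by grading: $\FF$ respects the decomposition by composition $\kappa$, and within a fixed finite $\Gamma_\kappa$ the relation $\hat\gamma=\mu\gamma$ forces $\mu$ empty and $\hat\gamma=\gamma$, so the pairing matrix is diagonal; its diagonal entries equal $\upsigma(\gamma)\geq 1$ (the connected value, extended to disconnected $\gamma$ through the factorisation~\eqref{equation:factorisation_connected_components} of both $\FF(\gamma)$ and $f_\gamma$ over the components of $\gamma$), hence it is invertible.

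The step I expect to be the main obstacle is the simultaneous bookkeeping of the stolon contributions and the $\theta$-extraction. A stolon component is a \emph{sum} of two monomials, weighted by $\theta^{S}_{v_1}$ and $\theta^{S}_{v_2}$, so a vertex of $\gamma$ mapping onto a stolon of $\hat\gamma$ may select either half; I must verify that the $\theta^{S}$-labels record this choice faithfully, that the degree-one extraction neither double-counts a stolon nor spuriously couples its two halves, and that, combined with the $\theta^{L}$-weights on the lianas and the root normalisation, these choices force a genuine isomorphism rather than a mere incidence-preserving map that could identify distinct exotic trees in low dimension. Controlling exactly these degeneracies is the reason the enriched parameters $\theta$ are introduced, and checking that they separate all of $\Gamma_\kappa$ — in contrast with the unparametrised dual fields of the purely aromatic setting~\cite{McLachlan16bsm} — is the delicate combinatorial core of the argument.
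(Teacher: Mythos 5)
Your proposal is correct and follows essentially the same route as the paper's proof: expand $(\FF_{\abs{\hat{\gamma}}}(\gamma)(f_{\hat{\gamma}}^{(\theta^{\hat{\gamma}})}))^1$ as a sum over assignments $i$ of the $\sigma$-classes of $\gamma$ to those of $\hat{\gamma}$, note that evaluation at $x=0$ forces $i$ to be compatible with the source and target maps (full exhaustion of in-degrees), and use the degree-one extraction in $\theta$ to force injectivity and preservation of the node/stolon/liana types, so that $i$ embeds $\gamma$ as a union of full connected components of $\hat{\gamma}$ containing the root, whence $\hat{\gamma}=\mu\gamma$, with the connected case counting graph isomorphisms (multiplicities from repeated differentiation accounting for arrow permutations within classes) to give $\upsigma(\gamma)$. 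The stolon bookkeeping you flag as the delicate point is resolved exactly as you outline --- each of $\theta^S_{v_1}$, $\theta^S_{v_2}$ must be hit exactly once, so the two halves of a stolon of $\hat{\gamma}$ are selected by the two members of a stolon of $\gamma$ --- which is the same (one-sentence) justification the paper itself gives, and your diagonal-pairing argument for injectivity of $\FF$ on $\Span(\Gamma)$ fills in a step the paper leaves as ``in particular''.
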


\begin{remark}
\label{remark:def_general_dual_vf}
Given an exotic aromatic tree~$\gamma=\mu_m \dots \mu_1 \tau$, we enforce an order on the aromas, so that~$\mu_1 \mu_2$ is now different from~$\mu_2 \mu_1$ if~$\mu_1\neq \mu_2$.
Consider now the additional parameter~$\theta^{\gamma}=(\theta^\tau,\theta^{\mu_1},\dots,\theta^{\mu_m})$, where the numbering of the nodes, lianas and stolons starts with~$\tau$, and continues in order with the~$\mu_i$.
With this order on the aromas and the numbering of~$\theta$, the first statement of Proposition~\ref{proposition:dual_vf} is then replaced by
\[
(\FF_{\abs{\hat{\gamma}}}(\gamma)(f_{\hat{\gamma}}^{(\theta^{\hat{\gamma}})}))^1_{\theta^\gamma}\Big\vert_{\theta=0}(0)\neq 0 \text{ if and only if } \hat{\gamma}= \mu \gamma, \quad \mu\in \Gamma_0.
\]
\end{remark}

\begin{proof}
Let~$\gamma=(V,\Circled{A},\sigma,\tau)$,~$\hat{\gamma}=(\hat V,\Circled{\hat{A}},\hat \sigma,\hat \tau)\in \Gamma^0\cup\Gamma$.
Definition~\ref{definition:elementary_differential} rewrites as
\begin{align*}
\FF_{\abs{\hat{\gamma}}}(\gamma)(f_{\hat{\gamma}}^{(\theta^{\hat{\gamma}})})(x)
&=\sum_{i\colon (V\cup \Circled{A})/\sigma \rightarrow (\hat{V}\cup \Circled{\hat{A}})/\hat{\sigma}} \prod_{v\in V} (f_{\hat{\gamma}})_{i_{\tau^{-1}(\{v\})}}^{i_{v}} \partial_{i_r}\\
&=\sum_{i\colon (V\cup \Circled{A})/\sigma \rightarrow (\hat{V}\cup \Circled{\hat{A}})/\hat{\sigma}}
\prod_{v\in V} \Big(
\theta_{i_v}^{V^{\diamond}} \theta_{\hat{\tau}^{-1}(i_v)}^L x_{\hat{\tau}^{-1}(i_v)}\ind_{i_v\in \hat{V}_0}\\&
+(\theta_{\hat{v}_1}^S \theta_{\hat{\tau}^{-1}(\hat{v}_1)}^L x_{\hat{\tau}^{-1}(\hat{v}_1)}+\theta_{\hat{v}_2}^S \theta_{\hat{\tau}^{-1}(\hat{v}_2)}^L x_{\hat{\tau}^{-1}(\hat{v}_2)})\ind_{i_v=(\hat{v}_1,\hat{v}_2)\in \hat{S}}
\Big)_{i_{\tau^{-1}(\{v\})}} \partial_{i_r}.
\end{align*}
where we fix~$\partial_{i_r}=\partial_{1}$ if~$\gamma\in \Gamma^0$.
By definition of the map~$i$, if~$\sigma(x)=y$,~$\hat{\sigma}(i_{x})=i_y$. Moreover, it is necessary that~$\hat{\tau}^{-1}(i_v)= i_{\tau^{-1}(\{v\})}$ for~$v\in V^{\diamond}$ (and analogously for~$v\in S$) so that~$(f_{\hat{\gamma}}^{(\theta^{\hat{\gamma}})})_{i_{\tau^{-1}(\{v\})}}^{i_{v}}(0)\neq 0$. Thus, the map~$i$ is compatible with the source and target maps. In particular,~$i$ sends predecessors of~$v$ to predecessors of~$i_v$.

On the other hand, the~$\theta$ parameter enforces the injectivity of~$i$ and it forces~$i$ to send stolons to stolons, lianas to lianas, nodes in~$V^{\diamond}$ to nodes in~$\hat{V}^{\diamond}$.
Thus~$i$ sends~$\gamma$ to a subgraph of~$\hat{\gamma}$. If~$\hat{\gamma}\neq\mu \gamma$, then at least an edge is missing and~$\FF_{\abs{\hat{\gamma}}}(\gamma)(f_{\hat{\gamma}}^{(\theta^{\hat{\gamma}})})(x)$ is a non-constant polynomial in~$x$, so that it vanishes at~$x=0$.

If~$\gamma$,~$\hat{\gamma}\in \Gamma^0_c\cup\Gamma_c$, the only maps~$i$ such that~$(\FF_{\abs{\hat{\gamma}}}(\gamma)(f_{\hat{\gamma}}^{(\theta^{\hat{\gamma}})}))^1_{\theta^\gamma}\Big\vert_{\theta=0}(0)\neq 0$ are the graph isomorphisms between~$\gamma$ and~$\hat{\gamma}$.
The number of such maps~$i$ is~$\upsigma(\gamma)$.
\end{proof}

\begin{ex*}
Consider the following exotic aromatic tree with its associated vector field and elementary differential
\[
\gamma=\eatree{011}{021}{1},
\quad
f_\gamma^{(\theta^\gamma)}\begin{pmatrix}
x_1\\
x_2\\
x_3
\end{pmatrix}=\begin{pmatrix}
0\\
\theta_{1}^S \theta_{\Circled{2}}^L \theta_{\Circled{3}}^L x_{3}^2+\theta_{2}^S \theta_{\Circled{1}}^L x_{1}\\
0
\end{pmatrix},
\quad (\FF_3(\gamma)(f_\gamma^{(\theta^\gamma)}))^1_{\theta^{\gamma}}\Big\vert_{\theta=0}(0)=2,
\]
where the coordinates represent in descending order the root, the stolon, and the liana.
Consider now the aroma
\[
\gamma=\earoma{02}{011}{1}, \quad
f_\gamma^{(\theta^\gamma)}\begin{pmatrix}
x_1\\
x_2
\end{pmatrix}=\begin{pmatrix}
\theta_{1}^{S} \theta_{\Circled{1}}^L x_{2}+\theta_{2}^S \theta_{\Circled{2}}^L x_{2}\\
0
\end{pmatrix}, \quad
\theta^{\gamma}=(\theta_{1}^{S},\theta_{2}^{S},\theta_{\Circled{1}}^L,\theta_{\Circled{2}}^L),
\]
and the tree
\[
\hat{\gamma}=\eatree{11}{000}{1}, \quad
f_{\hat{\gamma}}^{(\theta^{\hat{\gamma}})}\begin{pmatrix}
x_1\\
x_2
\end{pmatrix}=\begin{pmatrix}
\theta_{1}^{V^{\diamond}} x_{2}\\
\theta_{2}^{V^{\diamond}}
\end{pmatrix}, \quad
\theta^{\hat{\gamma}}=(\theta_{1}^{V^{\diamond}},\theta_{2}^{V^{\diamond}}).
\]
A calculation yields
\[
(\FF_{2}(\gamma)(f_{\hat{\gamma}}^{(\theta^{\hat{\gamma}})}))^1(x)=(\theta_{1}^{V^{\diamond}})^2, \quad (\FF_{2}(\gamma)(f_{\hat{\gamma}}^{(\theta^{\hat{\gamma}})}))^1_{\theta^\gamma}\Big\vert_{\theta=0}(0)=0.
\]
Note that fixing~$\theta=\ind$ yields~$(\FF_{2}(\gamma)(f_{\hat{\gamma}}^{(\ind)}))^1(0)=1$, so that the dual vector field without the~$\theta$ parameter fails to identify the difference between~$\gamma$ and~$\hat{\gamma}$.
The main use of the~$\theta$ parameter in the proof of Proposition~\ref{proposition:dual_vf} is to enforce the map~$i$ to be injective and to preserve the nature of each pair~$(v,\sigma(v))$. The dual vector field without the~$\theta$ parameter is not sufficient, even in the aromatic context:
\[\gamma=\eatree{02}{100}{1}, \quad\hat{\gamma}=\eatree{01}{100}{1}, \quad \FF_{\abs{\hat{\gamma}}}(\gamma)(f_{\hat{\gamma}}^{(\ind)})(0)=1.\]
This reveals a typographical error in~\cite[Rk.\ts 4.8]{Laurent20eab} where the remark only applies to exotic trees, and a minor error in~\cite[Sec.\ts 4.2]{McLachlan16bsm}. The further proofs of this paper can be adapted straightforwardly to fix the proofs in~\cite{McLachlan16bsm}.
\end{ex*}

\subsection{Categorical characterisations}
\label{section:strong_characterisations}

This section is devoted to the proof of Theorem~\ref{theorem:isometric_equivariance_EAB}.
Following Subsection~\ref{section:decomposition_invariant_subspaces}, as the regularity assumptions of Theorem~\ref{theorem:isometric_equivariance_EAB} imply the locality, orthogonal-equivariance, and trivially decoupling properties, we work directly with~$\varphi=\FF(\gamma)$ and~$\gamma\in \Span(\Gamma)$.

\begin{proposition}
\label{proposition:strong_equivariance_ppties}
Connected exotic aromatic B-series are decoupling.
stolonic B-series are left-orthogonal-equivariant and exotic B-series are right-orthogonal-equivariant.
\end{proposition}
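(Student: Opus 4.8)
The plan is to reduce everything to a single elementary differential and to one analytic identity, after which each claim becomes linear algebra organised by the graph. By linearity of $\FF$ on $\Span(\Gamma)$ and the reduction of Subsection~\ref{section:decomposition_invariant_subspaces}, I would verify each property on $\FF(\gamma)$ for a single tree $\gamma$ of the relevant kind. The only piece of analysis needed is the jet relation obtained by differentiating the hypothesis $f_2\circ a=Af_1$ through $a(x)=Ax+b$ (so that $a'=A$): for every order $j$,
\[(f_2^{(j)})^{i}_{k_1\dots k_j}(a(x))\,A^{k_1}_{m_1}\cdots A^{k_j}_{m_j}=A^{i}_{l}\,(f_1^{(j)})^{l}_{m_1\dots m_j}(x),\]
which says that contracting \emph{every} lower (derivative) slot of a jet of $f_2$ against $A$ converts it into $A$ times the matching jet of $f_1$. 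No inverse of $A$ is used, which is precisely what lets the argument survive the non-square Stiefel/Grassmann setting.

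For the decoupling statement I would take $\gamma\in\Gamma_c$ and expand $\FF_{d_1+d_2}(\gamma)(f_1\oplus f_2)$ over the index range split into the blocks $B_1=\{1,\dots,d_1\}$ and $B_2$. Since $(f_1\oplus f_2)^{i}$ depends only on the coordinates of the block containing $i$, a derivative $(f_1\oplus f_2)^{i}_{j_1\dots j_m}$ vanishes unless $i$ and all of $j_1,\dots,j_m$ lie in one block; hence every node factor forces its top index and all its derivative indices into a single block $\beta(v)\in\{1,2\}$. Each Kronecker factor of $\delta_{i_\sigma}$, whether from a standard edge, a stolon, or a liana, identifies two of these indices and therefore equates $\beta$ across neighbouring nodes in the sense of Definition~\ref{definition:exotic_aromatic_trees}. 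Connectedness of $\gamma$ then forces $\beta$ to be constant, so only the all-$B_1$ and all-$B_2$ terms survive; they reproduce $\FF_{d_1}(\gamma)(f_1)\oplus 0$ and $0\oplus\FF_{d_2}(\gamma)(f_2)$, which is exactly the decoupling identity.

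For the two strong-equivariance statements I would rewrite $\FF_{d_2}(\gamma)(f_2)(a(x))$ by converting each node factor through the jet relation, which requires every lower slot to carry a factor $A$, and then read off which orthogonality identity closes each contraction over the smaller dimension. A lower slot belonging to a standard edge receives its $A$ for free from the top index of the adjacent child once that child is converted, after which the contraction runs over $\{1,\dots,d_1\}$ with no metric condition (this is the affine-naturality of ordinary B-series), while the top index of the root supplies exactly the global factor $A$ matching $A\FF_{d_1}(\gamma)(f_1)(x)$. A stolon $\delta_{i_{v_1},i_{v_2}}$ joins two top indices, so after converting $v_1,v_2$ it becomes $\sum_{i=1}^{d_2}A^{i}_{l_1}A^{i}_{l_2}=(A^TA)_{l_1l_2}$, which collapses to $\delta_{l_1l_2}$ precisely under~\eqref{equation:Stiefel}; this yields Stiefel-equivariance of B-series with stolons (which carry no lianas). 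Dually, a liana $\delta_{i_{\Circled{a_1}},i_{\Circled{a_2}}}$ joins two lower slots that receive no $A$ on their own, so I would insert $\delta_{q,q'}=\sum_{m=1}^{d_1}A^{q}_{m}A^{q'}_{m}=(AA^T)_{qq'}$, valid under~\eqref{equation:Grassmann}, endowing both slots with the required $A$ and replacing the $d_2$-sum by a $d_1$-sum; the node conversions then apply and the liana closes over $\{1,\dots,d_1\}$, giving Grassmann-equivariance of exotic B-series (which carry no stolons).

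The step I expect to need the most care is the bookkeeping of these factors $A$ when stolons or lianas connect nodes in different branches, so that a naive leaf-to-root recursion does not close on its own. My plan is to perform all the liana identity-insertions globally first and to record the stolon top-index factors, so that afterwards every lower slot carries exactly one $A$; the conversion through the jet relation then applies to all nodes simultaneously along the forest of standard edges, and every remaining Kronecker contraction is already over $\{1,\dots,d_1\}$, producing $A\,\FF_{d_1}(\gamma)(f_1)(x)$ on the nose. The translation $b$ never intervenes, since all jets are evaluated at the single point $a(x)$.
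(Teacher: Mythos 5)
Your proposal is correct and takes essentially the same route as the paper's proof: the paper likewise differentiates the intertwining identity $\hat{f}(a(x))=Af(x)$ to obtain the jet relation $a_{K,J}\hat{f}_{K}^i(a(x))=a_{i,k} f_{J}^k(x)$, organises the node conversions by an induction on the depth of the tree (your ``simultaneous'' conversion along the forest of standard edges, made rigorous leaf-to-root), closes the stolon contractions with $A^TA=I_{d_1}$ in the Stiefel case, and in the Grassmann case first inserts $AA^T=I_{d_2}$ on every liana before running the same depth recursion on the tree with lianas removed. Your block-splitting argument for decoupling, with the Kronecker factors propagating block membership across neighbours and connectedness forcing a single block, is a correct fleshing-out of what the paper dismisses as straightforward from Definition~\ref{definition:elementary_differential}.
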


\begin{proof}
The decoupling property is straightforward from Definition~\ref{definition:elementary_differential}.
Let~$\gamma$ be an exotic aromatic tree without lianas and loops,~$f\in\mathfrak{X}(\R^{d_1})$,~$\hat{f}\in\mathfrak{X}(\R^{d_2})$,~$a(x)=Ax+b\in\leftiso_{d_1,d_2}$ with~$d_1 \leq d_2$.
Differentiating the identity~$\hat{f}(a(x))=Af(x)$ gives~$a_{K,J}\hat{f}_{K}^i(a(x))=a_{i,k} f_{J}^k(x)$.
We call leaves the vertices in~$V$ that are not the target of any arrow. We say a node~$v$ has depth~$p$ if the shortest path of~$v$ to a leaf passes through~$p$ different nodes (not including the start and end points). The nodes of depth at most~$p$ are gathered in the set~$V^{(p)}$. As~$\gamma$ does not have lianas or loops,~$V^{(\abs{\gamma})}=V$.
An induction on the depth yields
\begin{align*}
\FF_{d_2}(\gamma)(\hat{f})(a(x))
&=\sum_{i,k} \prod_{v\in V^{(0)}} a_{i_{v},k_{v}} f^{k_{v}} (x) \prod_{v\notin V^{(0)}} \hat{f}_{i_{\tau^{-1}(\{v\})}}^{i_{v}} (a(x)) \delta_{i_\sigma} \partial_{i_{\Circled{0}}}\\
&=\sum_{i,k} \prod_{v\in V^{(1)}} a_{i_{v},k_{v}}^{(1)} a_{k_{\tau^{-1}(\{v\})},i_{\tau^{-1}(\{v\})}}^{(1)} f^{k_{v}}_{k_{\tau^{-1}(\{v\})}}(x) \prod_{v\notin V^{(1)}} \hat{f}_{i_{\tau^{-1}(\{v\})}}^{i_{v}} (a(x)) \delta_{i_\sigma}^{(1)} \delta_{k_\sigma}^{(1)} \partial_{i_{\Circled{0}}}\\
&=\dots=\sum_{i,k} \prod_{v\in V^{(\abs{\gamma})}} a_{i_{v},k_{v}}^{(\abs{\gamma})} a_{k_{\tau^{-1}(\{v\})},i_{\tau^{-1}(\{v\})}}^{(\abs{\gamma})} f^{k_{v}}_{k_{\tau^{-1}(\{v\})}}(x) \delta_{i_\sigma}^{(\abs{\gamma})} \delta_{k_\sigma}^{(\abs{\gamma})} \partial_{i_{\Circled{0}}}\\
&=\sum_{i,k} \prod_{v\in V} a_{i_{\Circled{0}},k_{\sigma(\Circled{0})}} f^{k_{v}}_{k_{\tau^{-1}(\{v\})}}(x) \delta_{k_\sigma} \partial_{i_{\Circled{0}}}
=A\FF_{d_1}(\gamma)(f)(x),
\end{align*}
where~$a_{i_v,k_v}^{(p)}=a_{i_v,k_v}$ if~$\sigma(v)\in \tau^{-1}(w)$ with~$w\notin V^{(p)}$ and~$a_{i_v,k_v}^{(p)}=1$ else,~$\delta_{i_\sigma}^{(p)}$,~$\delta_{k_\sigma}^{(p)}$ contain the indices involved in the expression at step~$p$, and the sums are on all involved indices.

On the other hand, let the exotic tree~$\gamma$, the vector fields~$f\in\mathfrak{X}(\R^{d_1})$,~$\hat{f}\in\mathfrak{X}(\R^{d_2})$, and the affine transformation~$a(x)=Ax+b\in\rightiso_{d_1,d_2}$ with~$d_1 \geq d_2$. We find
\begin{align*}
\FF_{d_2}(\gamma)(g)(a(x))
&=\sum_{i,k} \prod_{v\in V} \hat{f}_{i_{\tau^{-1}(\{v\})}}^{i_{v}} (a(x)) \delta_{i_\sigma} \partial_{i_{\Circled{0}}}\\
&=\sum_{i,k} \prod_{v\in V} a_{i_{\tau^{-1}(\{v\})\cap L},k_{\tau^{-1}(\{v\})\cap L}} \hat{f}_{i_{\tau^{-1}(\{v\})}}^{i_{v}} (a(x)) \delta_{i_\sigma} \delta_{k_\sigma}^{(L)} \partial_{i_{\Circled{0}}},
\end{align*}
where we used that~$AA^T=I_{d_2}$ to add the coefficients associated to lianas and~$\delta_{k_\sigma}^{(L)}$ identifies the coefficients~$k$ associated to lianas.
The rest of the calculation is analogous to the left-orthogonal-equivariance case: we define the depth function on the tree without the lianas and we perform the calculation with an induction on the depth of the tree.
\end{proof}

\begin{proposition}
\label{proposition:partition_preserving_details}
Assume~$\varphi=\FF(\gamma)$ is decoupling, then~$\gamma\in \Span(\Gamma_c)$.
\end{proposition}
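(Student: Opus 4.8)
The plan is to show that no exotic aromatic tree carrying an aroma can survive in the support of $\gamma$ once $\FF(\gamma)$ is decoupling. Since $\FF$ is graded by the number of nodes and decoupling is preserved by the rescaling $f\mapsto tf$ (exactly as in the transfer lemma for the Taylor terms $\Delta^m\varphi$), I may assume $\gamma=\sum_j c_j\gamma_j$ is homogeneous with all $\gamma_j$ having the same number of nodes $m$, pairwise distinct, and $c_j\neq 0$. Using the canonical decomposition of an exotic aromatic tree into aromas and its connected rooted component, I write $\gamma_j=\mu_j\tau_j$ with $\mu_j\in\Gamma^0$ the (possibly empty) multi-aroma part and $\tau_j\in\Gamma_c$ the connected rooted part; this decomposition is unique, so distinct $\gamma_j$ yield distinct pairs $(\mu_j,\tau_j)$. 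Let $S_j$ be the number of nodes lying in the aromas of $\gamma_j$, so $\gamma_j$ is connected precisely when $S_j=0$.

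First I would record how elementary differentials interact with the block field $f_1\oplus f_2$. Since $(f_1\oplus f_2)^i_{j_1\cdots j_\ell}$ vanishes unless $i,j_1,\dots,j_\ell$ all lie in the same block, the Kronecker factors $\delta_{i_\sigma}$ together with the nonvanishing of the $f$-factors force, for each connected component of a tree, all of its indices into a single block. Summing over the block assigned to each component and using the factorisation \eqref{equation:factorisation_connected_components}, one gets for a connected aroma $\mu$ that $\FF(\mu)(f_1\oplus f_2)=\FF(\mu)(f_1)+\FF(\mu)(f_2)$, and for the connected rooted part that $\FF(\tau)(f_1\oplus f_2)=\FF(\tau)(f_1)\oplus\FF(\tau)(f_2)$. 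Writing $\alpha_{j,k}=\FF(\mu_{j,k})(f_1)$ and $\beta_{j,k}=\FF(\mu_{j,k})(f_2)$ for the aromas $\mu_{j,k}$ of $\gamma_j$, the first block of $\FF(\gamma_j)(f_1\oplus f_2)$ equals $\prod_k(\alpha_{j,k}+\beta_{j,k})\,\FF(\tau_j)(f_1)$, whereas decoupling demands $\prod_k\alpha_{j,k}\,\FF(\tau_j)(f_1)$. Subtracting and summing over $j$ gives
\[
\sum_j c_j\Big[\prod_k(\alpha_{j,k}+\beta_{j,k})-\prod_k\alpha_{j,k}\Big]\FF(\tau_j)(f_1)=0,
\]
valid for all $d_1,d_2$ and all $f_1\in\mathfrak{X}(\R^{d_1})$, $f_2\in\mathfrak{X}(\R^{d_2})$. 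Connected trees ($S_j=0$) have an empty product and drop out automatically.

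To isolate a usable factored form I would use bihomogeneity: replacing $f_1,f_2$ by $t_1f_1,t_2f_2$, the term obtained by placing aromas of total size $s$ in the second block carries the monomial $t_1^{m-s}t_2^{s}$. Assume for contradiction that some $\gamma_j$ with $c_j\neq 0$ has an aroma, and set $S^\ast=\max\{S_j:c_j\neq 0\}\geq 1$. Extracting the coefficient of the top power $t_2^{S^\ast}$ annihilates every tree with $S_j<S^\ast$ and, for the trees with $S_j=S^\ast$, keeps only the contribution with all aromas in the second block; since $\mu_j=\prod_k\mu_{j,k}$, this leaves
\[
\sum_{j:\,S_j=S^\ast} c_j\,\FF_{d_2}(\mu_j)(f_2)\,\FF_{d_1}(\tau_j)(f_1)=0,
\]
again for all $d_1,d_2,f_1,f_2$.

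Finally I would invoke linear independence. By Proposition~\ref{proposition:dual_vf}, which supplies (for large enough dimension) dual vector fields separating distinct multi-aromas and distinct connected rooted trees, the families $\{\FF_{d_2}(\nu)(\cdot)\}$ and $\{\FF_{d_1}(\rho)(\cdot)\}$ are each linearly independent; a standard tensor-product argument (fix $f_1$, use independence in $f_2$, then independence in $f_1$) then shows the products $\FF_{d_2}(\nu)(f_2)\,\FF_{d_1}(\rho)(f_1)$ over distinct pairs $(\nu,\rho)$ are independent as functions of $(f_1,f_2)$. As the pairs $(\mu_j,\tau_j)$ are distinct, every $c_j$ with $S_j=S^\ast$ must vanish, contradicting the choice of $S^\ast$. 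Hence each $\gamma_j$ in the support of $\gamma$ is connected, i.e.\ $\gamma\in\Span(\Gamma_c)$. I expect the last step to be the main obstacle: a single disconnected tree fails decoupling immediately, but ruling out cancellations among several disconnected trees forces one to factor out a pure second-block aroma part from a first-block rooted part and then appeal to the independence provided by the dual vector fields.
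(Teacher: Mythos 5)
Your proposal is correct, but it follows a genuinely different route from the paper's proof. The paper gives a single-evaluation argument: it selects one tree~$\hat{\gamma}=\mu_m\dots\mu_1\tau$ of maximal \emph{order} among the aroma-carrying terms of~$\gamma$, evaluates the map on the direct-sum dual field~$f_\tau^{(\theta^\tau)}\oplus f_{\mu_1}^{(\theta^{\mu_1})}\oplus\dots\oplus f_{\mu_m}^{(\theta^{\mu_m})}$, and plays two computations against each other: Proposition~\ref{proposition:dual_vf} (via Remark~\ref{remark:def_general_dual_vf} and the factorisation~\eqref{equation:factorisation_connected_components}) makes the~$\theta^{\hat{\gamma}}$-derivative of the first component nonzero, while decoupling forces that first component to equal~$(\FF_{\abs{\tau}}(\gamma)(f_\tau^{(\theta^\tau)}))^1$, which contains none of the aroma parameters~$\theta^{\mu_i}$, so the same derivative vanishes — an immediate contradiction. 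You instead maximise the aroma \emph{node count}, derive the general bilinear consequence of decoupling for an arbitrary~$f_1\oplus f_2$ (your block analysis~$\FF(\mu)(f_1\oplus f_2)=\FF(\mu)(f_1)+\FF(\mu)(f_2)$ for aromas and~$\FF(\tau)(f_1\oplus f_2)=\FF(\tau)(f_1)\oplus\FF(\tau)(f_2)$ for the rooted part is correct), extract the top~$t_2$-weight to factor out a pure second-block aroma layer, and finish by linear independence of the products~$\FF(\mu_j)(f_2)\,\FF(\tau_j)(f_1)$ over distinct pairs. Your instinct about where the work lives is accurate: that independence step needs separate independence of the elementary differentials of distinct connected rooted trees and of distinct multi-aromas, which is exactly what Proposition~\ref{proposition:dual_vf} and Remark~\ref{remark:def_general_dual_vf} supply (including for the scalar multi-aroma differentials, with the same small~$\mu\in\Gamma_0$ caveat that the paper's own proof also carries), so the tensor-product argument closes without a gap. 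What your version buys is a stronger intermediate statement — the factored identity~$\sum_j c_j\,\FF(\mu_j)(f_2)\,\FF(\tau_j)(f_1)=0$ isolating the extremal aroma layer, obtained from decoupling applied to \emph{all} direct sums — at the cost of the bihomogeneity bookkeeping and the extra independence lemma; the paper's version is shorter because the~$\theta$-differentiation against a single, carefully assembled dual field performs the isolation and the separation in one stroke.
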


\begin{proof}
Let~$\gamma\in \Span(\Gamma)$ and~$\hat{\gamma}\in \Gamma$ be one of its exotic aromatic trees of maximal order among the ones that have at least one aroma, so that
\begin{equation}
\label{equation:decomposition_connected_components}
\gamma=c\hat{\gamma}+R, \quad \hat{\gamma}=\mu_m\dots\mu_1\tau, \quad c\in \R, \quad R\in \Span(\Gamma).
\end{equation}
Without loss of generality, we assume~$c=1$.
Define~$f_{\hat{\gamma}}^{(\theta^{\hat{\gamma}})}$ as in Proposition~\ref{proposition:dual_vf} and following the numbering of Remark~\ref{remark:def_general_dual_vf},
\[
f_{\hat{\gamma}}^{(\theta^{\hat{\gamma}})}=f_\tau^{(\theta^\tau)} \oplus f_{\mu_1}^{(\theta^{\mu_1})} \oplus \dots \oplus f_{\mu_m}^{(\theta^{\mu_m})} \in \mathfrak{X}(\R^d),
\]
where the first entry of~$f_{\hat{\gamma}}^{(\theta^{\hat{\gamma}})}$ corresponds to the root of~$\tau$.
Using~\eqref{equation:factorisation_connected_components} and Proposition~\ref{proposition:dual_vf}, we obtain
\[(\FF_d(\gamma)(f_{\hat{\gamma}}^{(\theta^{\hat{\gamma}})}))^1_{\theta^{\hat{\gamma}}}\Big\vert_{\theta=0}(0)=(\FF_d(\hat{\gamma})(f_{\hat{\gamma}}^{(\theta^{\hat{\gamma}})}))^1_{\theta^{\hat{\gamma}}}\Big\vert_{\theta=0}(0)\neq 0.\]
On the other hand, as~$\FF(\gamma)$ is decoupling, we find
\[(\FF_d(\gamma)(f_{\hat{\gamma}}^{(\theta^{\hat{\gamma}})}))^1_{\theta^{\hat{\gamma}}}\Big\vert_{\theta=0}(0)=(\FF_{\abs{\tau}}(\gamma)(f_\tau^{(\theta^\tau)}))^1_{\theta^{\hat{\gamma}}}\Big\vert_{\theta=0}(0)=0,\]
as~$\theta^{\hat{\gamma}}$ holds more parameters than~$\theta^\tau$.
We obtain a contradiction and~$\gamma\in \Span(\Gamma_c)$.
\end{proof}

We now prove our second main result.
\begin{proof}[Proof of Theorem~\ref{theorem:isometric_equivariance_EAB}]
Assume~$\varphi=\FF(\gamma)$ is decoupling, then Proposition~\ref{proposition:partition_preserving_details} yields the connectedness of~$\gamma$.
Assume in addition that~$\varphi$ is local and right-orthogonal-equivariant and that at least one of the connected exotic aromatic trees~$\tau$ in~$\gamma$ has a stolon or a loop.
Consider~$d_1=\abs{\tau}$, index the coordinates of~$\R^{d_1}$ by~$(V\cup \Circled{A_0})/\sigma$ as in Proposition~\ref{proposition:dual_vf}.
We split the coordinates of~$\R^{d_1}$ into~$x=y\oplus z$, where~$y\in \R^{d_2}$ contains the coordinates that are not stolons or nodes in a loop and~$z$ the others. Let~$A\in\R^{d_2\times d_1}$ be the projection matrix on~$\R^{d_2}$, that is,~$A(y\oplus z)=y$.
Define~$f_1 =f_{\tau}^{(\theta^\tau)}$ and~$f_2(Ax)=Af_1(x)$. The right-orthogonal-equivariance property and Proposition~\ref{proposition:dual_vf} yield
\[
(\varphi_{d_2}(f_2))^1_{\theta^\tau}\Big\vert_{\theta=0}(0)
=(\varphi_{\abs{\tau}}(f_\tau^{(\theta^\tau)}))^1_{\theta^\tau}\Big\vert_{\theta=0}(0)
=(F_{\abs{\tau}}(\tau)(f_\tau^{(\theta^\tau)}))^1_{\theta^\tau}\Big\vert_{\theta=0}(0)
=\upsigma(\tau)\neq 0
.
\]
As~$d_1>d_2$, there is at least one~$\theta_v^{V^{\diamond}}$ or~$\theta_v^{S}$ that does not appear in~$f_2$, but appears in~$\theta^\tau$.
Thus~$(\varphi_{d_2}(f_2))^1_{\theta^\tau}\Big\vert_{\theta=0}(0)=0$, which brings a contradiction.

Assume now that~$\varphi=F(\gamma)$ is left-orthogonal-equivariant and that at least one of the exotic aromatic trees in~$\gamma$ has a liana or a loop.
Consider the decomposition~\eqref{equation:decomposition_connected_components} of~$\gamma$ where~$\hat{\gamma}$ is the term of maximal order among the ones that have a liana or a loop.
For~$d_2=\abs{\hat{\gamma}}$, we split the coordinates of~$\R^{d_2}$ into~$x=y\oplus z$ (or~$x=z\oplus y$ if the root is a ghost liana, so that the root is still in first position), where~$y\in \R^{d_1}$ contains the coordinates that are not lianas or nodes in a loop and~$z$ the others. Let~$A\in\R^{d_2\times d_1}$ be such that~$A^T$ is the projection matrix on~$\R^{d_1}$, that is,~$A^T(y\oplus z)=y$. Define~$f_2=f_{\hat{\gamma}}^{(\theta^{\hat{\gamma}})}$ and~$f_1(y)=A^T f_2(y\oplus 0)$.
Proposition~\ref{proposition:dual_vf} and the left-orthogonal-equivariance give
\begin{align*}
(\varphi_{d_1}(f_1))^1_{\theta^{\hat{\gamma}}}\Big\vert_{\theta=0}(0)=(\varphi_{d_2}(f_2))^1_{\theta^{\hat{\gamma}}}\Big\vert_{\theta=0}(0)\neq 0.
\end{align*}
As~$d_1<d_2$, there is at least a node or a liana of~$\hat{\gamma}$ that does not appear in~$f_1$, but appears in~$\theta^{\hat{\gamma}}$. We deduce~$(\varphi_{d_1}(f_1))^1_{\theta^{\hat{\gamma}}}\Big\vert_{\theta=0}(0)=0$, which brings a contradiction.

If~$\varphi$ is semi-orthogonal-equivariant, then~$\gamma$ is a linear combination of connected exotic aromatic trees without lianas, loops, and stolons, that is, a combination of standard Butcher trees.
\end{proof}

\subsection{Impact of degeneracies on the classification}
\label{section:degeneracies_2}

In a variety of contexts, the vector field~$f$ satisfies additional regularity properties.
For instance, if~$f$ is a polynomial map of order~$p$, then all exotic aromatic trees where at least a node is the target of more than~$p$ arrows have a trivial elementary differential. We mention in particular the work~\cite{Bogfjellmo22uat} on aromatic trees for quadratic differential equations that relies on such degeneracies.
In the original numerical application of the exotic aromatic B-series in molecular dynamics~\cite{Laurent20eab,Laurent21ocf} (see also~\cite{Lelievre10fec}), the vector fields~$f$ of interest are gradients, that is,~$f=\nabla V$ for a smooth function~$V\colon\R^d\rightarrow \R$.
In~\cite{Lelievre13onr,Duncan16vru,Abdulle19act}, vector fields of the form~$f=J\nabla V$ with~$J$ the standard symplectic matrix are used as perturbations to reduce the variance and accelerate the speed of convergence to equilibrium in the numerical integration of Langevin dynamics.
In this section, we update the classification of Theorem~\ref{theorem:isometric_equivariance_EAB} for gradient vector fields~$f=\nabla V$, gathered in the set~$\mathfrak{X}^\nabla(\R^d)$.
As discussed in~\cite[Remark 4.8]{Laurent20eab}, the gradient property of~$f$ translates into degeneracies.
\begin{proposition}[\cite{Laurent20eab,Laurent21ata}]
\label{proposition:degeneracies_gradient}
We say that two exotic aromatic trees~$\gamma_1$ and~$\gamma_2$ are equivalent on~$\mathfrak{X}^\nabla(\R^d)$, written~$\gamma_1\sim\gamma_2$, if by performing a finite number of the following operations, it is possible to transform~$\gamma_1$ into~$\gamma_2$:
\begin{itemize}
\item inversion edge-liana:~$\, \includegraphics[scale=0.55]{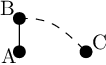} \, \sim \, \includegraphics[scale=0.55]{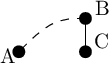} \,$ and~$\, \includegraphics[scale=0.55]{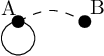} \, \sim \, \includegraphics[scale=0.55]{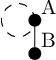} \,$,
\item inversion edge-stolon:~$\, \includegraphics[scale=0.55]{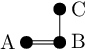} \, \sim \, \includegraphics[scale=0.55]{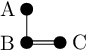} \,$,
\item simplification stolon-liana:~$\, \includegraphics[scale=0.55]{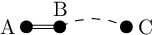} \, \sim \, \includegraphics[scale=0.55]{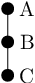} \,$ and~$\, \includegraphics[scale=0.55]{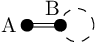} \, \sim \, \includegraphics[scale=0.55]{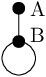} \,$.
\end{itemize}
The equivalence relation~$\sim$ preserves the composition~$\kappa$ of the graph and two equivalent exotic aromatic trees represent the same elementary differential~$\FF_d(\gamma_1)=\FF_d(\gamma_2)$ on~$\mathfrak{X}^\nabla(\R^d)$.
Moreover, for any connected exotic aromatic tree, there exists a unique exotic tree in its equivalence class.
\end{proposition}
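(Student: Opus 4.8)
The plan is to separate the statement into its three essentially independent assertions: invariance of the composition $\kappa$, equality of the elementary differentials on $\mathfrak{X}^\nabla(\R^d)$, and the existence and uniqueness of an exotic-tree representative in each $\sim$-class. The first two are local and I would dispatch them move by move. Each of the listed moves is a purely local modification of the pair $(\sigma,\tau)$ that, at a single vertex $v$, reassigns which leg attached to $v$ plays the role of the source index $i_v$ and which is treated as an incoming index $i_{\Circled{a}}$ with $\tau(\Circled{a})=v$. On a gradient field one has $f^{i}_{j_1\dots j_k}=V_{i\,j_1\dots j_k}$, and since the higher derivatives of $V$ are symmetric in all of their indices, such a reassignment leaves every factor $f^{i_v}_{i_{\tau^{-1}(\{v\})}}$ in Definition~\ref{definition:elementary_differential} unchanged, while the contraction $\delta_{i_\sigma}$ is untouched; hence $\FF_d(\gamma_1)=\FF_d(\gamma_2)$ on $\mathfrak{X}^\nabla(\R^d)$. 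For the invariance of $\kappa$ I would observe that the same reassignment exchanges one leg of $v$ for another but leaves the total number of legs of every vertex fixed, and a one-line inspection of each figure shows that no arrow is created or destroyed and only the type (standard edge, liana, or stolon) of the two affected legs changes. Thus the in-degree of each vertex, and therefore $\kappa(j)=\abs{V_j}$, is preserved.

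For existence I would argue by induction on the defect $N(\gamma)=\abs{S}+(\text{number of loops})$, which vanishes exactly when $\gamma\in\Gamma_c$ is an exotic tree. Indeed, if $\abs{S}=0$ and $\gamma$ has no loop, then a direct count (as in the proof of Lemma~\ref{lemma:def_order}) shows that the $\abs{V}-1$ standard edges, being acyclic and connected, form a spanning rooted tree, so removing the lianas leaves a Butcher tree. When $N(\gamma)>0$ I would exhibit a move strictly decreasing $N$: a stolon is removed (or pushed toward a liana and then cancelled) using the inversion edge-stolon and simplification stolon-liana moves, and once $\abs{S}=0$ a loop is broken by turning one of its standard edges into a liana through an inversion edge-liana move. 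The point requiring care is that each such step lowers $N$ without re-introducing the feature just eliminated; I would control this by always acting on an outermost stolon, respectively on an edge of a chosen loop, and checking that the liana produced does not close a new loop.

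Uniqueness is the main obstacle, and it cannot be obtained from the elementary differentials: distinct exotic trees may have the same gradient elementary differential (already at order three, the ``path'' and ``star'' shapes on three nodes both give the contraction $\sum V_{apq}V_{pm}V_{qm}\partial_a$), so no test field can separate them and the argument must be purely combinatorial. My plan is to orient the three rules so as to decrease the defect $N$ and to view them as a rewriting system on $\Gamma_c$; it terminates because $N$ is a nonnegative integer that strictly decreases along reductions. The normal forms are then exactly the graphs with $N=0$, that is, the exotic trees, so it remains to prove confluence. I would establish local confluence by a critical-pair analysis — enumerating the finitely many ways two moves can overlap on a common vertex or on a shared stolon/liana and checking that the two outcomes reduce to a common exotic tree — and then conclude by Newman's lemma that normal forms are unique up to isomorphism. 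The delicate input is precisely that the moves are the restricted local patterns of the statement rather than an unconstrained ``swap any two legs'': it is this restriction that prevents two different exotic trees from being interconverted directly, since every genuine reduction strictly lowers $N$ while exotic trees are minimal, and verifying that all critical pairs close is where the real work lies.
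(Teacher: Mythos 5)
Your handling of the first two assertions is correct and coincides with the paper's own argument: the paper dispatches the entire proposition in one line, as a direct consequence of the Schwarz theorem $f^i_{j_1\dots j_q}=f^{j_p}_{j_1\dots j_{p-1}ij_{p+1}\dots j_q}$, leaving the combinatorial last sentence to the cited works \cite{Laurent20eab,Laurent21ata}; your computation that a source/derivative-index reassignment leaves each factor of Definition~\ref{definition:elementary_differential} invariant on gradients, and preserves $\kappa$, is the same argument. Your sharpest observation is also right: uniqueness cannot be obtained by separating elementary differentials, since with root index $a$ the cherry with a liana joining its two leaves and the tall tree with a liana joining root and leaf satisfy
\begin{equation*}
\sum V_{apq}V_{pm}V_{qm}\,\partial_a=\sum V_{apm}V_{pq}V_{qm}\,\partial_a
\end{equation*}
on $\mathfrak{X}^\nabla(\R^d)$ (relabel $m\leftrightarrow q$ and use symmetry), yet these are non-isomorphic exotic trees (of order four by Lemma~\ref{lemma:def_order}, incidentally, not three).

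The uniqueness argument, however, has a genuine gap, in two places. First, the termination measure $N=\abs{S}+(\text{number of loops})$ does not strictly decrease under any orientation of the rules: the inversion edge-stolon move transports a stolon without changing $\abs{S}$, and the inversion edge-liana move changes neither $\abs{S}$ nor $\abs{L}$ and can create as well as destroy loops; only the stolon--liana simplification lowers $N$. Since the inversion moves must be applied in both directions to bring a stolon and a liana into cancelling position, your rewriting system necessarily contains reversible $N$-neutral steps, so ``terminates because $N$ strictly decreases'' is false and Newman's lemma cannot be applied to $N$ alone; you would need a finer well-founded order or a quotient by the neutral moves, neither of which you construct. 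Second, and more seriously, your own example collides with your description of the moves. You characterise each move as a local reassignment, at a single vertex, of which leg carries the source index $i_v$; but precisely such a reassignment at one leaf of the cherry --- exchanging its parent edge with its liana end --- transforms the cherry-with-liana into the path-with-liana above (the freed derivative slot of the root pairs with the freed liana slot of the leaf, producing the root--leaf liana). Under your reading, the two graphs are therefore equivalent: two distinct normal forms with $N=0$ in a single class, so the system is not confluent and the uniqueness you aim to prove fails inside your own framework. The resolution must be that the pictured rules are strictly narrower local patterns than ``swap any leg with the source leg'', and your proof never pins them down; consequently the deferred critical-pair analysis is not a routine verification but the entire content of the claim, and with the move semantics as you state them it cannot close. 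The paper avoids all of this machinery: the differential identities follow from Schwarz, and the normal-form statement is inherited from \cite{Laurent20eab,Laurent21ata} rather than re-proved.
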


\begin{proof}
This is a direct consequence of the Schwarz theorem~$f^i_{j_1 \dots j_q}=f^{j_p}_{j_1 \dots j_{p-1} i j_{p+1} \dots j_q}$.
\end{proof}

\begin{ex*}
The following connected exotic aromatic trees are equivalent to exotic trees:
\[\eatree{201}{011}{3} \sim \eatree{201}{000}{1}, \quad
\eatree{12}{011}{4}\sim\eatree{12}{011}{2}\sim\eatree{12}{000}{1}, \quad 
\eatree{1001}{021}{1}\sim\eatree{1001}{110}{2}\sim\eatree{1001}{010}{1}.\]
For further examples, the list of exotic aromatic trees of order 3 presented in Section~\ref{section:list_trees_order_3} gathers the equivalent exotic aromatic trees in adjacent lines.
\end{ex*}

On~$\mathfrak{X}^\nabla(\R^d)$, Theorem~\ref{theorem:orthogonal_equivariance_EAB} and Theorem~\ref{theorem:isometric_equivariance_EAB} reduce into the following result, which exactly characterises the exotic trees used in numerical analysis~\cite{Laurent20eab}.
\begin{theorem}
Let~$\varphi=(\varphi_d\colon \mathfrak{X}^\nabla(\R^d) \rightarrow\mathfrak{X}(\R^d))_d$ be a sequence of smooth maps.
The Taylor expansion of~$\varphi$ around the trivial vector field~$0$ is an exotic B-series on~$\mathfrak{X}^\nabla(\R^d)$ if and only if~$\varphi$ is local, orthogonal-equivariant, and decoupling.
\end{theorem}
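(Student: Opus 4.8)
The plan is to mirror the two main proofs of the paper on the restricted domain $\mathfrak{X}^\nabla(\R^d)$ and then quotient out the gradient degeneracies of Proposition~\ref{proposition:degeneracies_gradient}. The easy implication is the forward one: an exotic B-series is in particular an exotic aromatic B-series, so it is local and orthogonal-equivariant by Proposition~\ref{proposition:equivariance_ppty}; moreover, by Theorem~\ref{theorem:isometric_equivariance_EAB} an exotic B-series is Grassmann-equivariant, hence decoupling according to the implications summarised in the graph of Subsection~\ref{section:def_geom_properties}. All three properties survive restriction of the domain to gradient fields, which settles the direction $(\Rightarrow)$.

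For the converse, I would first check that $\mathfrak{X}^\nabla(\R^d)$ is stable under the operations used in the transfer arguments: it is a linear subspace, stable under scaling, under the direct sum $\oplus$ (the direct sum of two gradients $\nabla V_1\oplus\nabla V_2$ is the gradient of $(x,y)\mapsto V_1(x)+V_2(y)$), and under the action of $\Or_d(\R)\ltimes\R^d$ (an isometry sends a gradient to a gradient). Consequently the transfer of locality, orthogonal-equivariance and decoupling to the Taylor terms $\Delta^m\varphi$ goes through as in Subsection~\ref{section:decomposition_invariant_subspaces}, and Peetre's theorem reduces each $\Delta^m\varphi_d$ to a symmetric multilinear local $\Or_d(\R)$-equivariant map acting on gradient jets.

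The next step is to feed this into the invariant-tensor correspondence of Section~\ref{section:proof}. The only difference is that the jet of a gradient field is fully symmetric: by the Schwarz theorem the $j$-th jet lives in $S^{j+1}M^\ast$ rather than in $M\otimes S^jM^\ast$, so that gradient inputs are obtained from general ones by precomposition with the symmetrisation $M\otimes S^jM^\ast\to S^{j+1}M^\ast$ under the metric identification $M\cong M^\ast$. Since the elementary differentials already span all of $\SS_\kappa^{\Or_d(\R)}$ (Theorem~\ref{theorem:surjection_EAF}), evaluating on gradient jets shows that $\varphi_d$ restricted to $\mathfrak{X}^\nabla(\R^d)$ is an exotic aromatic B-series. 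The decoupling hypothesis then forces the underlying trees to be connected, exactly as in Proposition~\ref{proposition:partition_preserving_details}, and Proposition~\ref{proposition:degeneracies_gradient} replaces each connected exotic aromatic tree by the unique exotic tree in its $\sim$-class, so that $\varphi|_{\nabla}$ is an exotic B-series. To obtain a single coefficient map $b$ independent of the dimension I would rerun the gluing argument of Proposition~\ref{proposition:isomorphism_F}.

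The hard part is that $\FF$ is no longer injective on $\Span(\Gamma)$ over $\mathfrak{X}^\nabla(\R^d)$ — this non-injectivity is precisely the content of the relation $\sim$ — so Proposition~\ref{proposition:dual_vf} cannot be quoted verbatim. I would therefore re-establish the separation results after passing to the quotient $\Span(\Gamma)/\!\sim$, whose connected part is represented by exotic trees, and verify that the dual vector fields $f_\gamma^{(\theta^\gamma)}$ can be chosen to be gradients (their components are coordinate monomials, and the degeneracy moves of Proposition~\ref{proposition:degeneracies_gradient} allow one to normalise each connected component to an exotic tree before assembling the separating field). Granting injectivity of $\FF$ on the span of exotic trees in sufficiently large dimension, the identification of coefficients and the gluing across dimensions proceed as before, closing the argument.
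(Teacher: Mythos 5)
Your overall architecture is the same as the paper's: the theorem is stated there as a direct consequence of Theorems~\ref{theorem:orthogonal_equivariance_EAB} and~\ref{theorem:isometric_equivariance_EAB} together with Proposition~\ref{proposition:degeneracies_gradient}, and your forward direction, your stability checks on~$\mathfrak{X}^\nabla(\R^d)$, and your symmetrisation device (extending a map on gradient jets to general jets by precomposing with the~$\Or_d(\R)$-equivariant projection~$M\otimes S^jM^\ast\to S^{j+1}M^\ast$ under the metric identification) are all sound and in fact supply details the paper leaves implicit. The genuine problem sits in your last paragraph, in the connectedness step. Your plan is to rerun Proposition~\ref{proposition:partition_preserving_details} inside~$\mathfrak{X}^\nabla(\R^d)$ by choosing the dual fields of Proposition~\ref{proposition:dual_vf} to be gradients, after ``normalising each connected component to an exotic tree''. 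This fails twice. First, the fields~$f_\gamma^{(\theta^\gamma)}$ are essentially never gradients once lianas or stolons are present: the liana coordinates~$x_{\Circled{a}}$ appear as variables in the components~$f_\gamma^{(\theta^\gamma),v}$ while~$f_\gamma^{(\theta^\gamma),l}=0$, so~$\partial_{l}f^{v}\neq 0=\partial_{v}f^{l}$ and the Jacobian is not symmetric; a monomial perturbation does not repair this, so a separating \emph{gradient} family has to be built from potentials~$V$ (as in~\cite[Rk.\ts 4.8]{Laurent20eab}), which is a construction, not a verification. Second, and more seriously, the normalisation you invoke does not exist for the objects you actually need to kill: Proposition~\ref{proposition:degeneracies_gradient} asserts a unique exotic tree only in the class of a \emph{connected} (rooted) exotic aromatic tree, whereas the decoupling argument must separate graphs containing aromas, which are rootless, may carry loops, and are not~$\sim$-equivalent to any exotic tree. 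For a~$K$-loop the natural dual field is an oriented cycle~$f^{v_k}=\theta_k x_{v_{k+1}}$, again not a gradient, so your ``granting injectivity'' step conceals exactly the point at issue: on~$\mathfrak{X}^\nabla$ the map~$\FF$ is not injective even on the relevant quotient until one has produced gradient witnesses, and none are provided.

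There is a clean way to close this that stays within the paper's toolbox and avoids gradient duals altogether: do the decoupling argument \emph{after} your symmetrised extension rather than before. Since every symmetric jet is the jet of a gradient, and since~$\mathrm{sym}\bigl(j(f_1\oplus f_2)\bigr)=\mathrm{sym}(jf_1)\oplus\mathrm{sym}(jf_2)$, the decoupling of~$\varphi$ on~$\mathfrak{X}^\nabla$ transfers to the extended Taylor terms~$\Delta^m\varphi_d\circ\mathrm{sym}$ defined on all of~$\mathfrak{X}(\R^d)$. One may then quote Theorem~\ref{theorem:surjection_EAF}, Proposition~\ref{proposition:isomorphism_F} and Proposition~\ref{proposition:partition_preserving_details} verbatim for the extension, with the original non-gradient dual fields of Proposition~\ref{proposition:dual_vf}, to get a representing~$\gamma\in\Span(\Gamma_c)$; only afterwards restrict back to gradients and apply Proposition~\ref{proposition:degeneracies_gradient} to replace each connected tree by its unique equivalent exotic tree. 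With that reordering your proof is complete; as written, the separation step on~$\mathfrak{X}^\nabla$ is a gap.
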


\section{Conclusion and future works}
\label{section:future_work}

In this work, we proved that smooth local orthogonal-equivariant maps and exotic aromatic B-series represent the same object. This universal property shows that exotic aromatic B-series are not just a tool for calculations in numerical analysis, but a natural far-reaching algebraic object.
The analysis relies on the invariant tensor theorem for orthogonal-equivariant tensors and the Peetre theorem, but also on a new generalised construction of exotic aromatic trees.
In addition, we classified the intermediate subsets of exotic aromatic B-series, and in particular the exotic B-series, with respect to categorical equivariance properties. We also showed that exotic aromatic trees represent independent elementary differentials and identified the effect of the degeneracies appearing in numerical analysis on the classification.

A variety of theoretical and applied questions arise from the present work.
There exists a handful extensions of B-series used in numerical analysis such as partitioned B-series, exponential B-series \cite{Luan13ebs} or Lie-Butcher series \cite{Iserles00lgm}, and a variety of equivariance properties in~$\R^d$ but also on manifolds. We mention in particular the equivariance with respect to symplectic transformations. It would be interesting to draw geometric links between the different equivariance properties and the different types of B-series.
This could allow the creation of new extensions of B-series and to then find corresponding applications in numerical analysis.
For the B-series presented in this paper for instance, the stolonic B-series could be used in the study of projection methods and generalisations of the exotic aromatic formalism could be applied to the study of stochastic differential equations with multiplicative noise or to the creation of stochastic intrinsic methods of high order on manifolds (in the spirit of \cite{Iserles00lgm, MuntheKaas14ssa, Bharath23sae, MuntheKaas23lat, MuntheKaas24gio}).
This is matter for future work.

%

\bigskip

\noindent \textbf{Acknowledgements.}\
The work of Adrien Laurent was supported by the Research Council of Norway through project 302831 “Computational Dynamics and Stochastics on Manifolds” (CODYSMA).

\bibliographystyle{abbrv}
\bibliography{Ma_Bibliographie}

\newpage
\begin{appendices}

\section{Exotic aromatic trees of order 3}
\label{section:list_trees_order_3}

\begin{table}[!htb]
	\setcellgapes{2pt}
	\centering
	\begin{tabular}{|c|c|c|c|c|c|c|}
	\hline
	$\abs{\kappa}$ &~$\kappa$ &~$\boldsymbol{\kappa}'$ &~$\tau$ &~$\sigma$ &~$\gamma$ &~$\FF(\gamma)(f)$ \\
	\hhline{|=|=|=|=|=|=|=|}
	$1$ &~$(0,0,0,0,1)$ &~$(0,0,0,0,4)$ &~$(1,1,1,1)$ &~$(\Circled{0},1)(\Circled{1},\Circled{2})(\Circled{3},\Circled{4})$ &~$\eatree{00001}{020}{1}$ &~$f_{jjkk}^i\partial_i$ \\
	\cline{5-7}
	 &  &  &  &~$(\Circled{0},\Circled{1})(\Circled{2},1)(\Circled{3},\Circled{4})$ &~$\eatree{00001}{120}{1}$ &~$f^j_{ijkk}\partial_i$ \\
	\hline
	$2$ &~$(0,1,1)$ &~$(0,1,2)$ &~$(1,1,2)$ &~$(\Circled{0},2)(\Circled{1},\Circled{2})(\Circled{3},1)$ &~$\eatree{011}{010}{1}$ &~$f^i_j f^j_{kk} \partial_i$ \\
	\cline{5-7}
	 &  &  &  &~$(\Circled{0},2)(\Circled{1},1)(\Circled{2},\Circled{3})$ &~$\eatree{011}{110}{1}$ &~$f^i_j f^k_{jk} \partial_i$ \\
	 \cline{5-7}
	 &  &  &  &~$(\Circled{0},\Circled{3})(1,2)(\Circled{1},\Circled{2})$ &~$\eatree{011}{021}{1}$ &~$f^j_i f^j_{kk} \partial_i$ \\
	 \cline{5-7}
	 &  &  &  &~$(\Circled{0},\Circled{3})(\Circled{1},1)(\Circled{2},2)$ &~$\eatree{011}{110}{3}$ &~$f^j_i f^k_{jk} \partial_i$ \\
	 \cline{5-7}
	 &  &  &  &~$(\Circled{0},1)(\Circled{1},2)(\Circled{2},\Circled{3})$ &~$\eatree{011}{010}{2}$ &~$f^i_{jk} f^j_k \partial_i$ \\
	 \cline{5-7}
	 &  &  &  &~$(\Circled{0},\Circled{1})(\Circled{3},1)(\Circled{2},2)$ &~$\eatree{011}{100}{1}$ &~$f^k_{ij} f^j_k \partial_i$ \\
	 \cline{5-7}
	 &  &  &  &~$(\Circled{0},\Circled{1})(1,2)(\Circled{2},\Circled{3})$ &~$\eatree{011}{021}{2}$ &~$f^j_{ik} f^j_k \partial_i$ \\
	 \cline{5-7}
	 &  &  &  &~$(\Circled{0},1)(\Circled{1},\Circled{2})(\Circled{3},2)$ &~$\eatree{011}{110}{2}$ &~$f^i_{jj} f^k_k \partial_i$ \\
	 \cline{5-7}
	 &  &  &  &~$(\Circled{0},\Circled{1})(\Circled{2},1)(\Circled{3},2)$ &~$\eatree{011}{210}{1}$ &~$f^j_{ij} f^k_k \partial_i$ \\
	\hline
	$2$ &~$(1,0,0,1)$ &~$(0,0,0,3)$ &~$(1,1,1)$ &~$(\Circled{0},1)(\Circled{1},2)(\Circled{2},\Circled{3})$ &~$\eatree{1001}{010}{1}$ &~$f^i_{jkk} f^j \partial_i$ \\
	 \cline{5-7}
	 &  &  &  &~$(\Circled{0},\Circled{1})(\Circled{2},1)(\Circled{3},2)$ &~$\eatree{1001}{110}{2}$ &~$f^j_{ijk} f^k \partial_i$ \\
	 \cline{5-7}
	 &  &  &  &~$(\Circled{0},\Circled{1})(\Circled{2},\Circled{3})(1,2)$ &~$\eatree{1001}{021}{1}$ &~$f^j_{ikk} f^j \partial_i$ \\
	\cline{5-7}
	 &  &  &  &~$(\Circled{0},2)(\Circled{1},1)(\Circled{2},\Circled{3})$ &~$\eatree{1001}{110}{1}$ &~$f^i  f^j_{jkk} \partial_i$ \\
	 \hline
	\end{tabular}
	\renewcommand\thetable{3 (Part 1/2)}
	\caption{List of the exotic aromatic trees of order three, with their associated composition, derived composition, target map, source map, and elementary differential (see Definition~\ref{definition:elementary_differential}).}
	\setcellgapes{1pt}
\end{table}

\begin{table}[!htb]
	\setcellgapes{2pt}
	\centering
	\begin{tabular}{|c|c|c|c|c|c|c|}
	\hline
	$\abs{\kappa}$ &~$\kappa$ &~$\boldsymbol{\kappa}'$ &~$\tau$ &~$\sigma$ &~$\gamma$ &~$\FF(\gamma)(f)$ \\
	\hhline{|=|=|=|=|=|=|=|}
	~$3$ &~$(1,2)$ &~$(0,2)$ &~$(1,2)$ &~$(\Circled{0},1)(\Circled{1},2)(\Circled{2},3)$ & \eatree{12}{000}{1} &~$f^i_j f^j_k f^k \partial_i$ \\
	 \cline{5-7}
	 &  &  &  &~$(\Circled{0},1)(\Circled{1},\Circled{2})(2,3)$ & \eatree{12}{011}{2} &~$f^i_j f^k_j f^k \partial_i$ \\
	 \cline{5-7}
	 &  &  &  &~$(\Circled{0},\Circled{1})(1,2)(\Circled{2},3)$ & \eatree{12}{011}{1} &~$f^j_i f^j_k f^k \partial_i$ \\
	 \cline{5-7}
	 &  &  &  &~$(\Circled{0},\Circled{1})(\Circled{2},1)(2,3)$ & \eatree{12}{011}{4} &~$f^j_i f^k_j f^k \partial_i$ \\
	\cline{5-7}
	 &  &  &  &~$(\Circled{0},1)(\Circled{1},3)(\Circled{2},2)$ & \eatree{12}{100}{1} &~$f^i_j f^j f^k_k \partial_i$ \\
	 \cline{5-7}
	 &  &  &  &~$(\Circled{0},\Circled{2})(\Circled{1},1)(2,3)$ & \eatree{12}{111}{1} &~$f^j_i f^j f^k_k \partial_i$ \\
	\cline{5-7}
	 &  &  &  &~$(\Circled{0},3)(\Circled{1},2)(\Circled{2},1)$ & \eatree{12}{100}{2} &~$f^i f^j_k f^k_j \partial_i$ \\
	 \cline{5-7}
	 &  &  &  &~$(\Circled{0},3)(\Circled{1},\Circled{2})(1,2)$ & \eatree{12}{011}{3} &~$f^i f^j_k f^j_k \partial_i$ \\
	 \cline{5-7}
	 &  &  &  &~$(\Circled{0},3)(\Circled{1},1)(\Circled{2},2)$ & \eatree{12}{200}{1} &~$f^i f^j_j f^k_k \partial_i$ \\
	 \hline
	$3$ &~$(2,0,1)$ &~$(0,0,2)$ &~$(1,1)$ &~$(\Circled{0},1)(\Circled{1},2)(\Circled{2},3)$ &~$\eatree{201}{000}{1}$ &~$f^i_{jj} f^j f^j \partial_i$ \\
	 \cline{5-7}
	 &  &  &  &~$(\Circled{0},\Circled{1})(\Circled{2},2)(1,3)$ &~$\eatree{201}{011}{3}$ &~$f^j_{ik} f^j f^k \partial_i$ \\
	 \cline{5-7}
	 &  &  &  &~$(\Circled{0},1)(\Circled{1},\Circled{2})(2,3)$ &~$\eatree{201}{011}{1}$ &~$f^i_{jj} f^k f^k \partial_i$ \\
	 \cline{5-7}
	 &  &  &  &~$(\Circled{0},\Circled{1})(\Circled{2},1)(2,3)$ &~$\eatree{201}{111}{1}$ &~$f^j_{ij} f^k f^k \partial_i$ \\
	\cline{5-7}
	 &  &  &  &~$(\Circled{0},2)(\Circled{1},1)(\Circled{2},3)$ &~$\eatree{201}{100}{1}$ &~$f^i f^j_{jk} f^k \partial_i$ \\
	 \cline{5-7}
	 &  &  &  &~$(\Circled{0},3)(\Circled{1},\Circled{2})(1,2)$ &~$\eatree{201}{011}{2}$ &~$f^i f^j f^j_{kk} \partial_i$ \\
	 \hline
	~$4$ &~$(3,1)$ &~$(0,1)$ &~$(1)$ &~$(\Circled{0},1)(\Circled{1},2)(3,4)$ & \eatree{31}{001}{2} &~$f^i_j f^j f^k f^k \partial_i$ \\
	\cline{5-7}
	 &  &  &  &~$(\Circled{0},4)(1,2)(\Circled{1},3)$ & \eatree{31}{001}{1} &~$f^i f^j f^j_k f^k \partial_i$ \\
	 \cline{5-7}
	 &  &  &  &~$(\Circled{0},\Circled{1})(1,2)(3,4)$ & \eatree{31}{012}{1} &~$f^j_i f^j f^k f^k \partial_i$ \\
	 \cline{5-7}
	 &  &  &  &~$(\Circled{0},2)(\Circled{1},1)(3,4)$ & \eatree{31}{101}{1} &~$f^i f^j f^j f^k_k \partial_i$ \\
	 \hline
	~$5$ &~$(5)$ &~$(0)$ &  &~$(\Circled{0},1)(2,3)(4,5)$ & \eatree{5}{002}{1} &~$f^i f^j f^j f^k f^k \partial_i$ \\
	 \hline
	\end{tabular}
	\renewcommand\thetable{3 (Part 2/2)}
	\caption{List of the exotic aromatic trees of order three, with their associated composition, derived composition, target map, source map, and elementary differential (see Definition~\ref{definition:elementary_differential}).}
	\setcellgapes{1pt}
\end{table}

\end{appendices}

\end{document}